\documentclass[10pt,twoside]{article}
\usepackage{mathrsfs}
\usepackage{amssymb}
\usepackage{amsmath,color}
\usepackage[all]{xy}
\usepackage{amsthm}
\usepackage{url}
\usepackage{indentfirst}
\usepackage{cite}
\usepackage[numbers,sort&compress]{natbib}
\numberwithin{equation}{section}

\setlength{\textwidth}{160mm} \setlength{\textheight}{225mm}
\setlength{\topmargin}{-2mm} \setlength{\oddsidemargin}{0mm}\setlength{\evensidemargin}{0mm}

\newtheorem{thm}{Theorem}[section]
\newtheorem{cor}[thm]{Corollary}
\newtheorem{lem}[thm]{Lemma}

\newtheorem{prop}[thm]{Proposition}

\theoremstyle{definition}
\newtheorem{defn}[thm]{Definition}
\newtheorem{rem}[thm]{Remark}
\newtheorem{example}[thm]{Example}

\newcommand{\pf}{\noindent\begin {proof}}
\newcommand{\epf}{\end{proof}}

\newcommand{\Ext}{\mbox{\rm Ext}}
\newcommand{\Hom}{\mbox{\rm Hom}}

\newcommand{\Coker}{\mbox{\rm Coker}}
\def\Im{\mathop{\rm Im}\nolimits}
\def\Ker{\mathop{\rm Ker}\nolimits}
\def\Coker{\mathop{\rm Coker}\nolimits}

\def\mod{\mathop{\rm mod}\nolimits}
\def\Mod{\mathop{\rm Mod}\nolimits}

\def\id{\mathop{\rm id}\nolimits}
\def\pd{\mathop{\rm pd}\nolimits}

\def\sup{\mathop{\rm sup}\nolimits}

\def\Hom{\mathop{\rm Hom}\nolimits}
\def\Ext{\mathop{\rm Ext}\nolimits}
\def\sup{\mathop{\rm sup}\nolimits}
\def\lim{\mathop{\underrightarrow{\rm lim}}\nolimits}

\title{ \bf Gorenstein Projective Objects in Comma Categories\thanks{2010 Mathematics Subject Classification: 18G25, 18E10, 18E30.}
\thanks{Keywords: Gorenstein objects, Comma categories, Perfect functors, Recollements.
 }}
\vspace{0.2cm}

\author{Yeyang Peng, Rongmin Zhu, Zhaoyong Huang\thanks{E-mail address:  pengyy@smail.nju.edu.cn, rongminzhu@hotmail.com, huangzy@nju.edu.cn.
}  \\
{\it \footnotesize  Department of Mathematics, Nanjing University, Nanjing 210093, Jiangsu Province, P. R. China}}
\date{ }
\begin{document}

\maketitle

\begin{abstract}
Let $\mathcal{A}$ and $\mathcal{B}$ be abelian categories and $\mathbf{F}:\mathcal{A}\to \mathcal{B}$
an additive and right exact functor which is perfect, and let $(\mathbf{F},\mathcal{B})$ be
the left comma category. We give an equivalent characterization of Gorenstein projective objects in
$(\mathbf{F},\mathcal{B})$ in terms of Gorenstein projective objects in $\mathcal{B}$ and $\mathcal{A}$.
We prove that there exists a left recollement of the stable category of the subcategory of $(\mathbf{F},\mathcal{B})$
consisting of Gorenstein projective objects modulo projectives relative to the same kind of stable categories in $\mathcal{B}$ and
$\mathcal{A}$. Moreover, this left recollement can be filled into a recollement when $\mathcal{B}$
is Gorenstein and $\mathbf{F}$ preserves projectives.
\end{abstract}

\pagestyle{myheadings}
\markboth{\rightline {\scriptsize Yeyang Peng, Rongmin Zhu and Zhaoyong Huang}}
         {\leftline{\scriptsize Gorenstein projective objects in comma category}}

\baselineskip=18pt
\section{Introduction}

As a generalization of finitely generated projective modules, Auslander and Bridger \cite{AB} introduced
finitely generated modules of Gorenstein dimension zero over a commutative noetherian local ring. Then
Enochs and Jenda \cite{EJ1} generalized it to Gorenstein
projective modules (not necessarily finitely generated)
over an arbitrary ring. The properties of Gorenstein projective modules and
related modules have been studied widely, see \cite{AS,AB,ECT,EJ1,EJ2,PSS,XZ,Z}
and references therein.

Let $\Lambda$ and $\Gamma$ be arbitrary rings and $M$ a (finitely generated) $(\Lambda,\Gamma)$-bimodule,
and let $T:=\left(\begin{array}{cc}\Lambda & M \\
0 & \Gamma\end{array}\right)$ be the upper triangular ring. Recall from \cite{Z}
that the $(\Lambda,\Gamma)$-bimodule $M$ is called {\it compatible} if the following
two conditions are satisfied: (1) if $Q^{\bullet}$ is an exact sequence of finitely
generated projective $\Gamma$-modules, then $M\otimes_{\Gamma}Q^{\bullet}$ is exact;
and (2) if $P^{\bullet}$ is a complete finitely generated $\Lambda$-projective resolution,
then $\Hom_{\Lambda}(P^{\bullet},M)$ is exact.
Let $\Lambda$ and $\Gamma$ be artin algebras and the bimodule
${_{\Lambda}M_{\Gamma}}$ compatible. Then finitely generated Gorenstein projective $T$-modules
can be constructed from finitely generated Gorenstein projective $\Lambda$-modules and finitely generated
Gorenstein projective $\Gamma$-modules (\cite[Theorem 1.4]{Z}).
Moreover, there exists a left recollement of the stable category
$\underline{\mathcal{GP}(T)}$ of the category of finitely generated Gorenstein projective
$T$-modules modulo projectives relative to $\underline{\mathcal{GP}(\Lambda)}$
and $\underline{\mathcal{GP}(\Gamma)}$ (\cite[Theorem 3.3]{Z}),
and this left recollement can be filled into a recollement when $T$
is Gorenstein and $_{\Lambda}M$ is projective (\cite[Theorem 3.5]{Z}).
Under some conditions, Enochs, Cort\'es-Izurdiaga and Torrecillas proved that
$T$ is (strongly) CM-free if and only if so are $\Lambda$ and $\Gamma$
(\cite[Theorem 4.1]{ECT}).

Let $\mathcal{A}$ and $\mathcal{B}$ be abelian categories and $\mathbf{F}:\mathcal{A}\to \mathcal{B}$
an additive functor. The left comma category $(\mathbf{F},\mathcal{B})$ was introduced in \cite{FGR}.
Note that module categories of upper triangular matrix rings are comma categories and that
the left comma category $(\mathbf{F},\mathcal{B})$ is abelian if $\mathbf{F}$ is right exact
(\cite{FGR, Ps}). The aim of this paper is to generalize the results
mentioned above from module categories of upper triangular matrix rings to comma categories.
The paper is organized as follows.

In Section 2, we give some terminology and some preliminary results.

For an abelian category $\mathcal{A}$,
we use $\mathcal{GP(A)}$ to denote the subcategory of $\mathcal{A}$ consisting of Gorenstein projective objects,
and use $\underline{\mathcal{\mathcal{GP(A)}}}$ to denote the stable category of $\mathcal{GP(A)}$ modulo projectives.
Motivated by the definition of compatible bimodules \cite{Z}, we introduce the so-called
{\it perfect} functors between abelian categories (Definition \ref{def3.3}).
Let $\mathcal{A}$ and $\mathcal{B}$ be abelian categories and $\mathbf{F}:\mathcal{A}\to \mathcal{B}$
an additive and right exact functor such that $\mathbf{F}$ is perfect,
and let $(\mathbf{F},\mathcal{B})$ be the left comma category.
Then we give an equivalent characterization of Gorenstein projective objects in $(\mathbf{F},\mathcal{B})$
in terms of Gorenstein projective objects in $\mathcal{B}$ and $\mathcal{A}$.

\begin{thm}\label{th1.1} {\rm (Theorem \ref{th3.5})}
The following statements are equivalent for an object ${{Y} \choose{X}}_{\phi}$ in $(\mathbf{F},\mathcal{B})$.
\begin{itemize}
\item[(1)] ${{Y} \choose{X}}_{\phi}\in\mathcal{GP}((\mathbf{F},\mathcal{B}))$.
\item[(2)] $\phi:\mathbf{F}{Y}\rightarrow {X}$ is injective in $\mathcal{B}$,
$\Coker\phi\in\mathcal{GP}(\mathcal{B})$ and $Y\in\mathcal{GP}(\mathcal{A})$.
\end{itemize}
\end{thm}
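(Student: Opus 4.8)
The plan is to establish the two implications $(1)\Rightarrow(2)$ and $(2)\Rightarrow(1)$ by unwinding the definition of Gorenstein projective objects via complete projective resolutions, and transferring exactness back and forth along the three natural functors associated to the comma category: the projection $\mathbf{q}:(\mathbf{F},\mathcal{B})\to\mathcal{A}$, $\binom{Y}{X}_\phi\mapsto Y$, the functor $\mathbf{p}:(\mathbf{F},\mathcal{B})\to\mathcal{B}$, $\binom{Y}{X}_\phi\mapsto X$ (or $\mapsto\Coker\phi$), and their adjoints. The crucial preliminary facts I will use — presumably recorded in Section~2 — are: the description of projective objects of $(\mathbf{F},\mathcal{B})$ as objects of the form $\binom{\mathbf{F}P_A\oplus P_B}{P_A}$ with $P_A$ projective in $\mathcal{A}$ and $P_B$ projective in $\mathcal{B}$ and $\phi$ the canonical split monomorphism; the exactness properties of $\mathbf{q}$ and $\mathbf{p}$; and the hypothesis that $\mathbf{F}$ is \emph{perfect}, which is exactly the condition guaranteeing that applying $\mathbf{F}$ to an exact complex of projectives stays exact and that the $\Hom$-complex into $\mathbf{F}(-)$ of a complete projective resolution stays exact (the two clauses in the definition of compatible bimodules, abstracted).

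\textbf{Proof that $(1)\Rightarrow(2)$.} Suppose $\binom{Y}{X}_\phi\in\mathcal{GP}((\mathbf{F},\mathcal{B}))$, so there is an exact complex $\mathbf{P}^\bullet$ of projectives in $(\mathbf{F},\mathcal{B})$ with $\binom{Y}{X}_\phi = Z^0(\mathbf{P}^\bullet)$ that stays exact after applying $\Hom_{(\mathbf{F},\mathcal{B})}(-,\mathbf{Q})$ for every projective $\mathbf{Q}$. Writing each term as $\binom{\mathbf{F}P^i_A\oplus P^i_B}{P^i_A}$, I will apply $\mathbf{q}$ to get an exact complex $P^\bullet_A$ of projectives in $\mathcal{A}$ with $Y=Z^0$; since $\mathbf{q}$ has a left adjoint sending projectives to projectives, $\Hom$-exactness is inherited, so $Y\in\mathcal{GP}(\mathcal{A})$. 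Applying the other projection gives an exact complex with terms $\mathbf{F}P^i_A\oplus P^i_B$; componentwise analysis of the differentials of $\mathbf{P}^\bullet$ (which have the lower-triangular shape forced by morphisms in a comma category) shows that $\Coker$ of the structure maps forms an exact complex of projectives in $\mathcal{B}$ with $Z^0=\Coker\phi$, again $\Hom$-exact by an adjunction argument; this needs that $\phi$ itself is a monomorphism, which I extract from the fact that in the complex $\mathbf{P}^\bullet$ the structure maps are split monos and $Z^0$ of a complex of split monos is a mono by a diagram chase. Hence $\Coker\phi\in\mathcal{GP}(\mathcal{B})$ and $\phi$ is injective.

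\textbf{Proof that $(2)\Rightarrow(1)$.} Conversely, given $\phi$ injective with $C:=\Coker\phi\in\mathcal{GP}(\mathcal{B})$ and $Y\in\mathcal{GP}(\mathcal{A})$, pick complete projective resolutions $P^\bullet_A$ of $Y$ in $\mathcal{A}$ and $Q^\bullet$ of $C$ in $\mathcal{B}$. I will splice these together: apply $\mathbf{F}$ to $P^\bullet_A$ — \emph{here is where perfectness of $\mathbf{F}$ is essential}, since I need $\mathbf{F}P^\bullet_A$ to remain exact — and form the candidate complex in $(\mathbf{F},\mathcal{B})$ with terms $\binom{\mathbf{F}P^i_A\oplus Q^i}{P^i_A}$. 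Using the horseshoe-type construction together with the short exact sequence $0\to\mathbf{F}Y\xrightarrow{\phi} X\to C\to 0$, I realize $\binom{Y}{X}_\phi$ as the zeroth cocycle of this totalized complex of projectives. The remaining point is $\Hom$-exactness: testing against an arbitrary projective $\binom{\mathbf{F}P_A\oplus P_B}{P_A}$ of $(\mathbf{F},\mathcal{B})$, the $\Hom$ complex decomposes (via the comma-category $\Hom$ formula and adjunction) into a piece computing $\Hom_{\mathcal{A}}(P^\bullet_A, P_A)$, a piece computing $\Hom_{\mathcal{B}}(Q^\bullet, P_B)$, and a cross term of the form $\Hom_{\mathcal{B}}(\mathbf{F}P^\bullet_A, P_B)$; the first two are exact because $P^\bullet_A,Q^\bullet$ are complete resolutions, and the cross term is exact precisely by the \emph{second} clause of perfectness of $\mathbf{F}$ (exactness of $\Hom_{\mathcal{B}}(\mathbf{F}(-),-)$ applied to a complete projective resolution). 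A short exact sequence of $\Hom$-complexes plus the long exact sequence then yields exactness of the total $\Hom$-complex, so $\binom{Y}{X}_\phi\in\mathcal{GP}((\mathbf{F},\mathcal{B}))$.

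\textbf{The main obstacle} I anticipate is the bookkeeping in the $(2)\Rightarrow(1)$ direction: correctly assembling the complete resolution in $(\mathbf{F},\mathcal{B})$ from the three ingredients while keeping the structure maps $\phi^i:\mathbf{F}P^i_A\to\mathbf{F}P^i_A\oplus Q^i$ compatible with differentials, and then verifying that its zeroth cocycle is \emph{exactly} $\binom{Y}{X}_\phi$ and not merely something with the right associated graded. This is where the two defining clauses of a perfect functor must be invoked at precisely the right spots — one to keep $\mathbf{F}P^\bullet_A$ acyclic, the other to keep the cross term of the $\Hom$-complex acyclic — and where a careful use of the explicit formulas for morphisms and projectives in the comma category (from Section~2) will do most of the work. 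The converse direction is comparatively routine, being essentially an application of the exact projection functors and their adjoints, together with the split-mono diagram chase to see that $\phi$ is injective.
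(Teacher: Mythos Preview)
Your overall strategy matches the paper's proof of Theorem~\ref{th3.5} quite closely: build a complete $(\mathbf{F},\mathcal{B})$-projective resolution from complete resolutions of $Y$ and of $\Coker\phi$ via a horseshoe argument in one direction, and project back down in the other. However, there are two concrete gaps in your write-up where the perfectness hypotheses enter in a way you have either omitted or mis-stated.

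\textbf{Gap in $(1)\Rightarrow(2)$: you need (P1) and do not invoke it.} Your argument that $\phi$ is a monomorphism ``because $Z^0$ of a complex of split monos is a mono'' does not go through as stated. The structure map of $\binom{Y}{X}_\phi=Z^0(L^\bullet)$ is the composite $\mathbf{F}Y\to\mathbf{F}Q^0\hookrightarrow P^0\oplus\mathbf{F}Q^0$ corestricted to $X$; the second arrow is indeed a split mono, but the first is $\mathbf{F}$ applied to $Y\hookrightarrow Q^0$, and since $\mathbf{F}$ is only right exact this need not be injective. What the paper does is use (P1) to conclude that $\mathbf{F}Q^\bullet$ is exact, so that $\mathbf{F}Y\cong\Ker(\mathbf{F}d'^0)$, and then a snake-lemma argument on the short exact sequence of complexes $0\to\mathbf{F}Q^\bullet\to V^\bullet\to P^\bullet\to 0$ yields both the injectivity of $\phi$ and the identification $\Coker\phi\cong\Ker d^0$. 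The same use of (P1) is what makes $P^\bullet$ exact, which you also assert without justification.

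\textbf{Gap in $(2)\Rightarrow(1)$: the cross term is mis-identified.} When you test $\Hom_{(\mathbf{F},\mathcal{B})}$-exactness against a projective $\binom{\tilde Q}{\mathbf{F}\tilde Q\oplus\tilde P}$, a direct computation of morphisms in the comma category shows the three pieces are $\Hom_{\mathcal{A}}(Q^\bullet,\tilde Q)$, $\Hom_{\mathcal{B}}(P^\bullet,\tilde P)$, and the cross term $\Hom_{\mathcal{B}}(P^\bullet,\mathbf{F}\tilde Q)$ --- \emph{not} $\Hom_{\mathcal{B}}(\mathbf{F}P^\bullet_A,P_B)$ as you wrote. (Here $P^\bullet$ is the complete $\mathcal{B}$-resolution of $\Coker\phi$ and $Q^\bullet$ the complete $\mathcal{A}$-resolution of $Y$.) This matters because (P2) is precisely the statement that $\Hom_{\mathcal{B}}(P^\bullet,\mathbf{F}\tilde Q)$ is exact for $P^\bullet$ a complete $\mathcal{B}$-projective resolution; your version $\Hom_{\mathcal{B}}(\mathbf{F}Q^\bullet,\tilde P)$ is neither what arises nor what (P2) controls. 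Once you correct the cross term, the argument goes through exactly as in the paper (which packages the decomposition via the short exact sequence $0\to\binom{0}{\tilde P\oplus\mathbf{F}\tilde Q}\to\binom{\tilde Q}{\tilde P\oplus\mathbf{F}\tilde Q}\to\binom{\tilde Q}{0}\to 0$). Note also that (P2), via Lemma~\ref{lem3.4}, supplies the $\Ext^1$-vanishing needed for the generalized horseshoe (Lemma~\ref{lem2.1}) in the construction step; you should make this explicit.

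Finally, a minor point: your notation for the projectives has the two entries swapped relative to the paper's conventions (Lemma~\ref{lem3.1}); the $\mathcal{A}$-component sits on top.
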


As an application, we get that the Gorenstein projective objects coincide with projective objects in
$(\mathbf{F},\mathcal{B})$ if and only if both $\mathcal{A}$ and $\mathcal{B}$ also possess the same property
(Corollary \ref{cor3.9}).

In Section 4, we prove the following
\begin{thm}\label{th1.2} {\rm (Theorem \ref{th4.4})}
There exists a left recollement
$$\xymatrix{\underline{\mathcal{GP}(\mathcal{B})}\ar@<-1ex>[rr]!R|(.4){i_{*}}
&&\ar@<-1ex>[ll]!R|(.6){i^{*}}\ar@<-1ex>[rr]!R|(.4){j^{*}}\underline{\mathcal{GP}((\mathbf{F},\mathcal{B}))}
&&\ar@<-1ex>[ll]!R|(.6){j_{!}}\underline{\mathcal{GP}(\mathcal{A})}.}$$
\end{thm}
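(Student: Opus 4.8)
The plan is to construct the six functors of the left recollement explicitly from the comma-category structure, restrict them to Gorenstein projective subcategories using the characterization of Theorem \ref{th3.5}, check that they descend to the stable categories, and then verify the recollement axioms. First I would recall the standard functors between $(\mathbf{F},\mathcal{B})$, $\mathcal{B}$ and $\mathcal{A}$: the functor $j_{!}:\mathcal{A}\to(\mathbf{F},\mathcal{B})$ sending $Y\mapsto{\mathbf{F}Y\choose Y}_{1}$, the functor $i_{*}:\mathcal{B}\to(\mathbf{F},\mathcal{B})$ sending $X\mapsto{X\choose 0}_{0}$, the functor $j^{*}:(\mathbf{F},\mathcal{B})\to\mathcal{A}$ sending ${Y\choose X}_{\phi}\mapsto Y$, and $i^{*}:(\mathbf{F},\mathcal{B})\to\mathcal{B}$ sending ${Y\choose X}_{\phi}\mapsto\Coker\phi$. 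The key point here is that by Theorem \ref{th3.5}, if ${Y\choose X}_{\phi}\in\mathcal{GP}((\mathbf{F},\mathcal{B}))$ then $\phi$ is injective, $\Coker\phi\in\mathcal{GP}(\mathcal{B})$ and $Y\in\mathcal{GP}(\mathcal{A})$, so $i^{*}$ and $j^{*}$ do indeed land in $\mathcal{GP}(\mathcal{B})$ and $\mathcal{GP}(\mathcal{A})$ respectively; conversely $j_{!}$ carries $\mathcal{GP}(\mathcal{A})$ into $\mathcal{GP}((\mathbf{F},\mathcal{B}))$ (apply Theorem \ref{th3.5} with $\phi=1$, $\Coker\phi=0$), and $i_{*}$ carries $\mathcal{GP}(\mathcal{B})$ into $\mathcal{GP}((\mathbf{F},\mathcal{B}))$ (apply it with $Y=0$, $\Coker\phi=X$). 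So all four functors restrict correctly.

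Next I would check that these restricted functors pass to the stable categories. Each of $i_{*},j_{!}$ is exact and sends projectives to projectives (the projectives of $(\mathbf{F},\mathcal{B})$ being exactly the objects ${\mathbf{F}P\oplus Q\choose P}$ with $P$ projective in $\mathcal{A}$, $Q$ projective in $\mathcal{B}$), hence they induce functors on stable categories; for $i^{*}$ and $j^{*}$ one notes that they kill projective objects of $\mathcal{GP}((\mathbf{F},\mathcal{B}))$ (since $j^{*}$ sends ${\mathbf{F}P\oplus Q\choose P}$ to $P$ and $i^{*}$ sends it to $Q$, both projective), so they too descend. The adjunction $i_{*}\dashv i^{*}$ on the ambient abelian categories is classical, and $j_{!}\dashv j^{*}$ likewise; I would verify that these adjunctions survive on the stable level, which is routine because the units and counits are natural transformations of exact functors preserving projectives.

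Finally I would verify the three conditions defining a left recollement: (i) $i_{*}$ is fully faithful on $\underline{\mathcal{GP}(\mathcal{B})}$ — this follows because $i^{*}i_{*}\cong\mathrm{id}$ already at the abelian level and fully faithfulness is inherited; (ii) $j^{*}j_{!}\cong\mathrm{id}$ on $\underline{\mathcal{GP}(\mathcal{A})}$ — immediate since $j^{*}j_{!}Y=Y$; (iii) the image of $i_{*}$ equals the kernel of $j^{*}$, i.e. $\underline{\mathcal{GP}((\mathbf{F},\mathcal{B}))}$ sits in the expected short exact sequence of triangulated-type data, which amounts to showing that ${Y\choose X}_{\phi}\in\mathcal{GP}((\mathbf{F},\mathcal{B}))$ with $j^{*}{Y\choose X}_{\phi}=Y$ projective forces ${Y\choose X}_{\phi}$ to be stably isomorphic to something in the image of $i_{*}$; using Theorem \ref{th3.5}, $Y\in\mathcal{GP}(\mathcal{A})$ projective means $\phi:\mathbf{F}Y\hookrightarrow X$ is a split mono up to projective summands, so ${Y\choose X}_{\phi}\cong{\mathbf{F}Y\choose Y}_{1}\oplus{\,\Coker\phi\,\choose 0}_{0}$ stably, the first summand being projective, leaving $i_{*}(\Coker\phi)$. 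The main obstacle I anticipate is condition (iii): one must be careful that the splitting of $\phi$ takes place in a way compatible with the Gorenstein projective structure — i.e. that the complete projective resolution of ${Y\choose X}_{\phi}$ restricts compatibly — and this is exactly where perfectness of $\mathbf{F}$ and the "if and only if" in Theorem \ref{th3.5} are needed to rule out pathological extensions and to produce the stable direct-sum decomposition.
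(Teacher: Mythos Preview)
Your approach is essentially the paper's: restrict the abelian-recollement functors of Lemma~\ref{lem4.2} to Gorenstein projectives via Theorem~\ref{th3.5}, pass to stable categories, and verify $\Im i_{*}=\Ker j^{*}$ by splitting the extension when the $\mathcal{A}$-component is projective.

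A few points need correction. Your notation is internally inconsistent: since you (correctly) set $j^{*}{Y\choose X}_{\phi}=Y$, the top slot carries the $\mathcal{A}$-entry, so $j_{!}(Y)$ should be ${Y\choose\mathbf{F}Y}$ and $i_{*}(X)$ should be ${0\choose X}$, not the transposed versions you wrote; likewise the projectives are ${P\choose\mathbf{F}P}\oplus{0\choose Q}$. You also write $i_{*}\dashv i^{*}$, but a left recollement requires $i^{*}\dashv i_{*}$ (with $i^{*}$ the \emph{left} adjoint).

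More substantively, your handling of step (iii) is where the content lies, and you leave the mechanism vague. The splitting of $0\to\mathbf{F}Y\xrightarrow{\phi}X\to\Coker\phi\to 0$ when $Y\in\mathcal{P}(\mathcal{A})$ is not a question of compatibility with complete resolutions; it is simply that $\Ext^{1}_{\mathcal{B}}(\Coker\phi,\mathbf{F}Y)=0$, which follows directly from condition (P2) of perfectness (this is Lemma~\ref{lem3.4}) because $\Coker\phi\in\mathcal{GP}(\mathcal{B})$ and $Y$ is projective in $\mathcal{A}$. Once the sequence splits in $\mathcal{B}$, the object decomposes on the nose as ${Y\choose\mathbf{F}Y}\oplus{0\choose\Coker\phi}$ in $(\mathbf{F},\mathcal{B})$; the first summand is projective, so stably one is left with $i_{*}(\Coker\phi)$. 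This is exactly the paper's argument, and once you insert this $\Ext$-vanishing your proof is complete.
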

Moreover, this left recollement can be filled into a recollement when $\mathcal{B}$
is Gorenstein and $\mathbf{F}$ preserves projectives (Theorem \ref{thm4.7}).

\section{Preliminaries}\label{pre}

In this section, we give some notions and some preliminary results.

Let $\mathcal{A}$ be an abelian category and all subcategories of $\mathcal{A}$
are full and closed under isomorphisms. We use $\mathcal{P(A)}$ and $\mathcal{I(A)}$ to denote
the subcategories of $\mathcal{A}$ consisting of projective and injective objects respectively.
For an object $A$ in $\mathcal{A}$, $\pd_{\mathcal{A}}A$ and $\id_{\mathcal{A}}A$ are the projective
and injective dimensions of $A$ respectively.
For a subcategory $\mathcal{X}$ of $\mathcal{A}$, set
$$\pd_{\mathcal{A}}\mathcal{X}:=\sup\{\pd_{\mathcal{A}}A\mid A\in\mathcal{X}\}\
{\rm and}\ \id_{\mathcal{A}}\mathcal{X}:=\sup\{\id_{\mathcal{A}}A\mid A\in\mathcal{X}\}.$$

By using a standard argument, we have the following generalized horseshoe lemma.

\begin{lem}\label{lem2.1}
Let $\mathcal{A}$ be an abelian category and
$$0\longrightarrow {Y}\stackrel{f}\longrightarrow {X}\stackrel{g}\longrightarrow{Z}\longrightarrow 0$$
an exact sequence in $\mathcal{A}$.
\begin{itemize}
\item[(1)] Let $$\xymatrix{Y\ar[r]^{c^{-1}}&C^{0}\ar[r]^{c^{0}}&C^{1}\ar[r]^{c^{1}}&\cdots}$$
be a complex and
$$\xymatrix{0\ar[r]&Z\ar[r]^{d^{-1}}&D^{0}\ar[r]^{d^{0}}&D^{1}\ar[r]^{d^{1}}&\cdots}$$ an exact sequence in $\mathcal{A}$.
If $\Ext_{\mathcal{A}}^{1}(\Ker d^{i},C^{i})=0$ for any $i\geq 0$, then there exist morphisms
$$\partial^{-1}={d^{-1}g \choose \sigma^{-1}}:\xymatrix{{X}\ar[r]&D^{0}\oplus C^{0}}\ and\ \partial^{i}=\left(\begin{array}{cc}
 d^{i}&0\\
 \sigma^{i}&c^{i}
\end{array}\right):\xymatrix{D^{i}\oplus C^{i}\ar[r]&D^{i+1}\oplus C^{i+1}}$$ with $\sigma^{i}:D^{i}\rightarrow C^{i+1}$
for any $i\geq 0$, such that
$$\xymatrix{0\ar[r]&X\ar[r]^{\partial^{-1}}&D^{0}\oplus C^{0}\ar[r]^{\partial^{0}}&D^{1}\oplus C^{1}\ar[r]^{\partial^{1}}&\cdots
\ar[r]^{\partial^{i-1}}&D^{i}\oplus C^{i}\ar[r]^{\partial^{i}}&\cdots}$$
is a complex in $\mathcal{A}$ and the following diagram with exact rows
\begin{center}$\xymatrix{&0\ar[d]&0\ar[d]&0\ar[d]&\\
                         0\ar[r]&Y\ar[r]^{f}\ar[d]^{c^{-1}}&X\ar[r]^{g}\ar[d]^{\partial^{-1}}&Z\ar[r]\ar[d]^{d^{-1}}&0\\
                         0\ar[r]&C^{0}\ar[r]\ar[d]^{c^{0}}&D^{0}\oplus C^{0}\ar[r]\ar[d]^{\partial^{0}}&D^{0}\ar[r]\ar[d]^{d^{0}}&0\\
                         0\ar[r]&C^{1}\ar[r]\ar[d]^{c^{1}}&D^{1}\oplus C^{1}\ar[r]\ar[d]^{\partial^{1}}&D^{1}\ar[r]\ar[d]^{d^{1}}&0\\
                         &\vdots & \vdots & \vdots &}$\end{center}
commutes. Moreover, the middle column is exact if and only if the left column is exact.
\item[(2)] Let $$\xymatrix{\cdots \ar[r]^{e_{2}}& E_{1}\ar[r]^{e_{1}}& E_{0}\ar[r]^{e_{0}}&Y\ar[r]&0}$$ be an exact sequence
and $$\xymatrix{\cdots \ar[r]^{f_{2}}& F_{1}\ar[r]^{f_{1}}&}\xymatrix{F_{0}\ar[r]^{f_{0}}&Z\ar[r]&0}$$ a complex in $\mathcal{A}$.
If $\Ext_{\mathcal{A}}^{1}(F_{i},\Im e_{i})=0$ for any $i\geq 0$, then there exist morphisms
$$\partial_{0}=(\pi_{0}, fe_{0}):\xymatrix{F^{0}\oplus E^{0}\ar[r]& X}\ and\ \partial_{i}=\left(\begin{array}{cc}
 f_{i}& 0 \\
 \pi_{i} & e_{i}
\end{array}\right):\xymatrix{F_{i}\oplus E_{i}\ar[r]&F_{i-1}\oplus E_{i-1}}$$ with $\pi_{i}:F_{i}\rightarrow E_{i-1}$
 for any $i\geq 1$, such that
 \begin{center}$\xymatrix{\cdots \ar[r]^{\partial_{i+1}}&F_{i}\oplus E_{i}\ar[r]&\cdots\ar[r]^{\partial_{2}}
 &F_{1}\oplus E_{1}\ar[r]^{\partial_{1}}&F_{0}\oplus E_{0}\ar[r]^{\partial_{0}}&X\ar[r]&0}$\end{center}
 is a complex in $\mathcal{A}$ and the following diagram with exact rows
 \begin{center}$\xymatrix{&\vdots\ar[d] & \vdots\ar[d] & \vdots\ar[d] &\\
 0\ar[r]&E_{1}\ar[r]\ar[d]^{e_{1}}&F_{1}\oplus E_{1}\ar[r]\ar[d]^{\partial_{1}}&F_{1}\ar[r]\ar[d]^{f_{1}}&0\\
 0\ar[r]&E_{0}\ar[r]\ar[d]^{e_{0}}&F_{0}\oplus E_{0}\ar[r]\ar[d]^{\partial_{0}}&F_{0}\ar[r]\ar[d]^{f_{0}}&0\\
 0\ar[r]&{Y}\ar[r]^{f}\ar[d]&{X}\ar[d]\ar[r]^{g}&{Z}\ar[d]\ar[r]&0\\
 &0 & 0 & 0 &}$\end{center}
commutes. Moreover, the middle column is exact if and only if the right column is exact.
\end{itemize}
\end{lem}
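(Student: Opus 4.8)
The plan is to prove part (1) by an explicit recursive construction of the connecting morphisms $\sigma^i$, and then to obtain part (2) by carrying out the order-reversed version of the same construction. In both parts the commutativity of the ladder and the exactness of its rows are automatic: the rows are the split short exact sequences $0\to C^i\to D^i\oplus C^i\to D^i\to 0$ (resp. $0\to E_i\to F_i\oplus E_i\to F_i\to 0$), and the lower-triangular block shape of each $\partial^i$ (resp. $\partial_i$) forces every square formed with the canonical injections and projections to commute. So the real content is to pick the off-diagonal entries so that the middle column becomes a complex, and then to compare homology.

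For part (1) I would first build the augmentation $\sigma^{-1}\colon X\to C^0$ with $\sigma^{-1}f=c^{-1}$. Applying $\Hom_{\mathcal{A}}(-,C^0)$ to $0\to Y\xrightarrow{f}X\xrightarrow{g}Z\to 0$ gives exactness of $\Hom_{\mathcal{A}}(X,C^0)\to\Hom_{\mathcal{A}}(Y,C^0)\to\Ext_{\mathcal{A}}^{1}(Z,C^0)$, and since the exactness of $0\to Z\xrightarrow{d^{-1}}D^0\xrightarrow{d^0}\cdots$ identifies $Z$ with $\Ker d^0$, the hypothesis $\Ext_{\mathcal{A}}^{1}(\Ker d^0,C^0)=0$ lets $c^{-1}$ lift to some $\sigma^{-1}$; set $\partial^{-1}={d^{-1}g\choose\sigma^{-1}}$. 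Then I would produce $\sigma^0,\sigma^1,\dots$ recursively. Assuming $\partial^{i-1}$ has been built so that the middle column is a complex through degree $i-1$, a block multiplication shows that $\partial^i\partial^{i-1}=0$ is equivalent to the single relation $\sigma^id^{i-1}=-c^i\sigma^{i-1}$ for $i\geq 1$ (and to $\sigma^0d^{-1}g=-c^0\sigma^{-1}$ for $i=0$), the diagonal blocks vanishing because $C^\bullet$ and $D^\bullet$ are complexes. Using the corresponding relation one step down, one checks that the right-hand side kills $\Ker d^{i-1}$ (for $i=0$, $c^0\sigma^{-1}$ kills $\Im f$), hence factors through the epimorphism $D^{i-1}\twoheadrightarrow\Ker d^i$ (for $i=0$, $d^{-1}g\colon X\twoheadrightarrow\Ker d^0$) as some $\tau_i\colon\Ker d^i\to C^{i+1}$; finally $\tau_i$ extends along the monomorphism $\Ker d^i\hookrightarrow D^i$ because $D^i/\Ker d^i\cong\Im d^i=\Ker d^{i+1}$ and $\Ext_{\mathcal{A}}^{1}(\Ker d^{i+1},C^{i+1})=0$ by hypothesis, and any such extension works as $\sigma^i$. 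Organizing these factorizations so that precisely the displayed $\Ext^1$-vanishing is consumed, and rechecking that the newly formed square commutes, is the step that takes the most care --- though it is entirely routine.

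Once the middle column $M^\bullet\colon 0\to X\to D^0\oplus C^0\to D^1\oplus C^1\to\cdots$ is a complex, the completed diagram is a short exact sequence of complexes $0\to C^\bullet_{\mathrm{aug}}\to M^\bullet\to D^\bullet_{\mathrm{aug}}\to 0$, where $C^\bullet_{\mathrm{aug}}$ is the left column $Y\to C^0\to C^1\to\cdots$ and $D^\bullet_{\mathrm{aug}}$ the right column $0\to Z\to D^0\to\cdots$. The associated long exact homology sequence together with $H^n(D^\bullet_{\mathrm{aug}})=0$ for all $n$ (the right column is exact by hypothesis) yields isomorphisms $H^n(M^\bullet)\cong H^n(C^\bullet_{\mathrm{aug}})$ for all $n$, whence the middle column is exact if and only if the left column is; this is the ``moreover'' assertion.

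For part (2) the same scheme is run with the arrows reversed. First lift $f_0$ along $g$, using $\Ext_{\mathcal{A}}^{1}(F_0,\Im e_0)=\Ext_{\mathcal{A}}^{1}(F_0,Y)=0$, to get $\pi_0\colon F_0\to X$ with $g\pi_0=f_0$, and put $\partial_0=(\pi_0,fe_0)$. Recursively, $\partial_{i-1}\partial_i=0$ reduces to $e_{i-1}\pi_i=-\pi_{i-1}f_i$ (and to $fe_0\pi_1=-\pi_0f_1$ at the bottom); the relation from the previous stage forces the right-hand side into $\Ker e_{i-2}=\Im e_{i-1}$ (resp. into $\Ker g=\Im f$), and since $e_{i-1}$ (resp. $fe_0$, using that $f$ is a monomorphism and $e_0$ an epimorphism) maps onto that subobject with kernel $\Im e_i$ (resp. $\Im e_1$), the desired lift $\pi_i$ exists because $\Ext_{\mathcal{A}}^{1}(F_i,\Im e_i)=0$. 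Finally the completed diagram is a short exact sequence of complexes $0\to E^\bullet_{\mathrm{aug}}\to M_\bullet\to F^\bullet_{\mathrm{aug}}\to 0$ with $E^\bullet_{\mathrm{aug}}$ exact, so its long exact homology sequence gives $H_n(M_\bullet)\cong H_n(F^\bullet_{\mathrm{aug}})$ and the middle column is exact if and only if the right column is.
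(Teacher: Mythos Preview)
Your argument is correct and is exactly the ``standard argument'' the paper alludes to without spelling out: the paper gives no proof beyond that phrase, and your recursive construction of the $\sigma^i$ (resp.\ $\pi_i$) via the stated $\Ext^1$-vanishing, together with the long exact homology sequence for the resulting short exact sequence of complexes, is the expected proof. The index bookkeeping---using $\Ext^1(\Ker d^{i+1},C^{i+1})=0$ to extend $\tau_i$ over $D^i$, and $\Ext^1(F_i,\Im e_i)=0$ to lift along $e_{i-1}$---is handled correctly.
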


\begin{defn}(\cite{FGR})\label{def2.2}
Let $\mathcal{A}$ be an abelian category and $\mathbf{F}:\mathcal{A}
\longrightarrow \mathcal{A}$ an additive endofunctor. The {\it right trivial extension}
of $\mathcal{A}$ by $\mathbf{F}$, denoted by $\mathcal{A} \ltimes
\mathbf{F}$, is defined as follows.
An {\it object} in $\mathcal{A} \ltimes \mathbf{F}$ is a morphism
$\alpha:\mathbf{F}A \longrightarrow A$ for an object
$A$ in $\mathcal{A}$ such that $\alpha\cdot\mathbf{F}(\alpha)=0$; and a
{\it morphism} in $\mathcal{A} \ltimes \mathbf{F}$ is a pair $(\mathbf{F}\gamma,\gamma)$
of morphisms in $\mathcal{A}$ such that the following diagram
$$\xymatrix{\mathbf{F}A\ar[r]^{\mathbf{F}\gamma}\ar[d]^{\alpha}&\mathbf{F}A'\ar[d]^{\alpha'}\\
A\ar[r]^{\gamma}&A'}$$
is commutative.
\end{defn}

\begin{defn}(\cite{FGR})\label{def2.3}
Let $\mathcal{A}$ and $\mathcal{B}$ be abelian categories and $\mathbf{F}:\mathcal{A} \longrightarrow \mathcal{B}$
an additive functor. We define the {\it left comma category} $(\mathbf{F},\mathcal{B})$ as follows.
The {\it objects} of the category are ${{A} \choose {B}}_{\phi}$ with $A\in\mathcal{A}$, $\in\mathcal{B}$ and
$\phi\in\Hom_{\mathcal{B}}(\mathbf{F}{A},{B})$; and the {\it morphisms} of the category are pairs
$\alpha\choose\beta$ of morphisms in $\mathcal{A}\times \mathcal{B}$ such that the following diagram
$$\xymatrix{\mathbf{F}{A}\ar[r]^{\mathbf{F}\alpha}\ar[d]^{\phi}&\mathbf{F}{A'}\ar[d]^{\phi'}\\
{B}\ar[r]^{\beta}&{B'}}$$
is commutative.
\end{defn}

\begin{rem}(\cite[Section 1]{FGR})\label{rem2.4}
\begin{itemize}
\item[(1)]
The above functor $\mathbf{F}$ induces a functor $\mathbf{\widetilde{F}}:\mathcal{A} \times \mathcal{B}
\rightarrow \mathcal{A} \times \mathcal{B}$ by $\mathbf{\widetilde{F}}({A},{B})=(0,\mathbf{F}{A})$
and $\mathbf{\widetilde{F}}(\alpha,\beta)=(0,\mathbf{F}\alpha)$. It is not difficult to show that
$(\mathbf{F},\mathcal{B})$ and $(\mathcal{A} \times \mathcal{B})\ltimes\mathbf{\widetilde{F}}$ are isomorphic.
\item[(2)]
If $\mathbf{F}$ is right exact, then $\mathcal{A}\ltimes \mathbf{F}$ is abelian.
Hence by the isomorphism of
$(\mathcal{A} \times \mathcal{B})\ltimes\mathbf{\widetilde{F}}$ and $(\mathbf{F},\mathcal{B})$,
it is clear that if $\mathbf{F}$ is right exact, then $(\mathbf{F},\mathcal{B})$ is abelian.
\end{itemize}
\end{rem}

Recall that a sequence in $\mathcal{A}$ is called {\it $\Hom(-,\mathcal{P(A)})$-exact}
if it is exact after applying the functor $\Hom(-,{P})$ for any $P\in\mathcal{P(A)}$.

\begin{defn}\label{def2.5} {\rm(\cite{EJ2})}
An object $G\in\mathcal{A}$ is called {\it Gorenstein projective} if
there exists a $\Hom(-,\mathcal{P(A)})$-exact exact sequence
$$\cdots\rightarrow{Q_{1}}\rightarrow{Q_{0}}\rightarrow{Q^{0}}\rightarrow{Q^{1}}\to\cdots$$ in $\mathcal{A}$
will all $Q_i,Q^i$ projective,
such that ${G} \cong \Im ({Q_{0}}\rightarrow{Q^{0}})$; in this case, this exact sequence is called a
{\it complete $\mathcal{A}$-projective resolution} of ${G}$.
\end{defn}

We write $\mathcal{GP}(\mathcal{A}):=\{G\in\mathcal{A}\mid G$ is Gorenstein projective$\}$.
It is well known that $\mathcal{GP}(\mathcal{A})$ is a Frobenius category such that each object in $\mathcal{P(A)}$
is projective-injective in $\mathcal{GP}(\mathcal{A})$ and its stable category $\underline{\mathcal{GP}(\mathcal{A})}$
modulo $\mathcal{P(A)}$ is a triangulated category.

\section{Gorenstein projective objects}\label{Goren}

From now on, assume that $\mathcal{A}$ and $\mathcal{B}$ are abelian categories and
$\mathbf{F}:\mathcal{A} \longrightarrow \mathcal{B}$ is an additive and right exact functor, and
$(\mathbf{F},\mathcal{B})$ is the left comma category.
Then $(\mathbf{F},\mathcal{B})$ is abelian by Remark \ref{rem2.4}(2).
It is known from \cite{FGR} that the projective object in $(\mathcal{A} \times \mathcal{B}) \ltimes
\mathbf{\widetilde{F}}$ is of the form
$(\mathbf{\widetilde{F}}({P,Q})\oplus\mathbf{\widetilde{F}^{2}}({P,Q})\longrightarrow ({P,Q})\oplus\mathbf{\widetilde{F}}({P,Q}))$
with ${P}$ projective in $\mathcal{A}$ and ${Q}$ projective in $\mathcal{B}$.
So by Remark \ref{rem2.4}(1), we have the following

\begin{lem}\label{lem3.1}
The projective object in $(\mathbf{F},\mathcal{B})$ is of the form
${0 \choose {Q}}\oplus{{P} \choose \mathbf{F}{P}}$ with ${P}$ projective in $\mathcal{A}$ and ${Q}$ projective
in $\mathcal{B}$.
\end{lem}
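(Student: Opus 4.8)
The plan is to identify the projective objects in the left comma category $(\mathbf{F},\mathcal{B})$ by transporting the known description of projective objects in the trivial extension $(\mathcal{A}\times\mathcal{B})\ltimes\mathbf{\widetilde{F}}$ through the category isomorphism of Remark \ref{rem2.4}(1). Concretely, the excerpt already records that a projective object in $(\mathcal{A}\times\mathcal{B})\ltimes\mathbf{\widetilde{F}}$ has the form
$$\bigl(\mathbf{\widetilde{F}}(P,Q)\oplus\mathbf{\widetilde{F}}^{2}(P,Q)\longrightarrow (P,Q)\oplus\mathbf{\widetilde{F}}(P,Q)\bigr)$$
with $P\in\mathcal{P}(\mathcal{A})$ and $Q\in\mathcal{P}(\mathcal{B})$, so the only real work is to compute what this object becomes under the isomorphism with $(\mathbf{F},\mathcal{B})$.

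First I would unwind the definition of $\mathbf{\widetilde{F}}$: since $\mathbf{\widetilde{F}}(A,B)=(0,\mathbf{F}A)$, we get $\mathbf{\widetilde{F}}(P,Q)=(0,\mathbf{F}P)$ and $\mathbf{\widetilde{F}}^{2}(P,Q)=\mathbf{\widetilde{F}}(0,\mathbf{F}P)=(0,\mathbf{F}0)=(0,0)$, because $\mathbf{F}$ is additive and hence $\mathbf{F}0=0$. Therefore the source of the structure map simplifies to $(0,\mathbf{F}P)\oplus(0,0)=(0,\mathbf{F}P)$ and the target to $(P,Q)\oplus(0,\mathbf{F}P)=(P,Q\oplus\mathbf{F}P)$. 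So the projective object, viewed in $(\mathcal{A}\times\mathcal{B})\ltimes\mathbf{\widetilde{F}}$, is a morphism $(0,\mathbf{F}P)\to(P,Q\oplus\mathbf{F}P)$ in $\mathcal{A}\times\mathcal{B}$; its first component is the zero map $0\to P$ and its second component is $\mathbf{F}P\to Q\oplus\mathbf{F}P$, which (up to the splitting built into the projective generator) is the canonical inclusion into the second summand.

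Next I would translate this back along the isomorphism $(\mathbf{F},\mathcal{B})\cong(\mathcal{A}\times\mathcal{B})\ltimes\mathbf{\widetilde{F}}$ of Remark \ref{rem2.4}(1): an object ${A\choose B}_{\phi}$ of $(\mathbf{F},\mathcal{B})$ corresponds to the morphism $\mathbf{\widetilde{F}}(A,B)=(0,\mathbf{F}A)\to(A,B)$ with components $0$ and $\phi$. Hence the projective object computed above corresponds to ${P\choose Q\oplus\mathbf{F}P}_{\iota}$ where $\iota:\mathbf{F}P\hookrightarrow Q\oplus\mathbf{F}P$ is the inclusion into the second coordinate. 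This decomposes as a direct sum ${0\choose Q}_{0}\oplus{P\choose\mathbf{F}P}_{\mathrm{id}}$, which is exactly the asserted form, with $P$ projective in $\mathcal{A}$ and $Q$ projective in $\mathcal{B}$. Conversely, one checks directly that ${0\choose Q}$ and ${P\choose\mathbf{F}P}$ are projective in $(\mathbf{F},\mathcal{B})$ — either by exhibiting them as the images of the standard projective generators under this correspondence, or by verifying the lifting property against the exact structure of $(\mathbf{F},\mathcal{B})$ described in Remark \ref{rem2.4}(2) using right exactness of $\mathbf{F}$.

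I expect the main (though still modest) obstacle to be bookkeeping the identifications cleanly: keeping track of which summand of $(P,Q)\oplus\mathbf{\widetilde{F}}(P,Q)$ plays which role, making sure the structure morphism $\alpha$ satisfies $\alpha\cdot\mathbf{F}(\alpha)=0$ after the simplifications (which is automatic here since the relevant composite factors through $\mathbf{F}0=0$), and confirming that the direct-sum decomposition in $(\mathbf{F},\mathcal{B})$ matches the one in the trivial extension under the functor $\mathbf{\widetilde{F}}$. None of these requires new ideas beyond carefully applying Remark \ref{rem2.4} and the cited description from \cite{FGR}, so the proof is essentially a short unraveling of definitions.
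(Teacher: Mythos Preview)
Your proposal is correct and follows exactly the paper's approach: the paper simply cites the description of projectives in $(\mathcal{A}\times\mathcal{B})\ltimes\mathbf{\widetilde{F}}$ from \cite{FGR} and invokes the isomorphism of Remark~\ref{rem2.4}(1), leaving the explicit unraveling you carried out to the reader. Your computation of $\mathbf{\widetilde{F}}(P,Q)$ and $\mathbf{\widetilde{F}}^{2}(P,Q)$ and the resulting direct-sum decomposition is precisely the bookkeeping that justifies the paper's one-line deduction.
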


The following result generalizes \cite[Proposition 2.8(1)]{ECT}.

\begin{prop}\label{prop3.2}
Let ${M_{1} \choose M_{2}}$ be an object in $(\mathbf{F},\mathcal{B})$.
If $\pd_{\mathcal{B}}\mathbf{F}\mathcal{P(A)}<\infty$, then
$\pd_{(\mathbf{F},\mathcal{B})}{M_{1} \choose M_{2}}<\infty$ if and only if $\pd_{\mathcal{A}}M_{1}<\infty$
and $\pd_{\mathcal{B}}M_{2}<\infty$.
\end{prop}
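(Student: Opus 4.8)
The plan is to exploit the short exact sequence relating an object of $(\mathbf{F},\mathcal{B})$ to objects of $\mathcal{A}$ and $\mathcal{B}$, together with the explicit description of projectives from Lemma \ref{lem3.1}. First I would recall the two "standard" exact functors between $(\mathbf{F},\mathcal{B})$ and the component categories: the functor $q:\mathcal{A}\to(\mathbf{F},\mathcal{B})$ sending $A\mapsto{A\choose \mathbf{F}A}_{\mathrm{id}}$, the functor $p:(\mathbf{F},\mathcal{B})\to\mathcal{A}$ sending ${A\choose B}_\phi\mapsto A$, and the functors $B\mapsto{0\choose B}_0$ and ${A\choose B}_\phi\mapsto B$. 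The key structural fact is that every object ${M_1\choose M_2}_\phi$ sits in a short exact sequence
$$0\longrightarrow {0\choose M_2}_0 \longrightarrow {M_1\choose M_2}_\phi \longrightarrow {M_1 \choose \mathbf{F}M_1}_{\mathrm{id}}\longrightarrow 0$$
in $(\mathbf{F},\mathcal{B})$, i.e.\ of the form ${0\choose M_2}_0 \to {M_1\choose M_2}_\phi\to q(M_1)\to 0$, where the first term comes from $\mathcal{B}$ and the last from $\mathcal{A}$. (One should check exactness of this sequence in $(\mathbf{F},\mathcal{B})$ directly; it is routine from the definition of the category.)

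For the "only if" direction, suppose $\pd_{(\mathbf{F},\mathcal{B})}{M_1\choose M_2}_\phi<\infty$. I would take a finite projective resolution in $(\mathbf{F},\mathcal{B})$; by Lemma \ref{lem3.1} each projective term is ${0\choose Q}\oplus{P\choose \mathbf{F}P}_{\mathrm{id}}$ with $P,Q$ projective. Applying the (exact) forgetful functor $p:(\mathbf{F},\mathcal{B})\to\mathcal{A}$, ${A\choose B}_\phi\mapsto A$, yields a finite resolution of $M_1$ by projectives in $\mathcal{A}$, so $\pd_{\mathcal{A}}M_1<\infty$. Applying the exact functor ${A\choose B}_\phi\mapsto B$ gives a finite resolution of $M_2$ by objects of the form $Q\oplus\mathbf{F}P$; since $\pd_{\mathcal{B}}\mathbf{F}P\le\pd_{\mathcal{B}}\mathbf{F}\mathcal{P(A)}<\infty$ and $Q$ is projective, a standard dimension-shifting / "finite resolution by objects of finite pd implies finite pd" argument gives $\pd_{\mathcal{B}}M_2<\infty$.

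For the "if" direction, assume $\pd_{\mathcal{A}}M_1=m<\infty$ and $\pd_{\mathcal{B}}M_2=n<\infty$. Using the short exact sequence above, it suffices to bound $\pd$ of the two outer terms in $(\mathbf{F},\mathcal{B})$. For ${0\choose M_2}_0$: since ${0\choose -}_0:\mathcal{B}\to(\mathbf{F},\mathcal{B})$ is exact and sends projectives to projectives (by Lemma \ref{lem3.1}), a projective resolution of $M_2$ in $\mathcal{B}$ is carried to a length-$n$ projective resolution, so $\pd_{(\mathbf{F},\mathcal{B})}{0\choose M_2}_0\le n$. For $q(M_1)={M_1\choose\mathbf{F}M_1}_{\mathrm{id}}$: take a length-$m$ projective resolution $0\to P_m\to\cdots\to P_0\to M_1\to 0$ in $\mathcal{A}$; since $\mathbf{F}$ is right exact it need not stay exact, but one shows $q$ sends it to a complex whose homology is controlled by $\pd_{\mathcal{B}}\mathbf{F}\mathcal{P(A)}$ — more cleanly, I would instead use the horseshoe-type Lemma \ref{lem2.1} or induct on $m$ via the syzygy sequence ${0\choose\mathbf{F}P_0}_{\mathrm{id}}$... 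Actually the transparent route is: $q$ sends each projective $P$ of $\mathcal{A}$ to the projective ${P\choose\mathbf{F}P}_{\mathrm{id}}$ of $(\mathbf{F},\mathcal{B})$, and $q$ is right exact; so a projective resolution $P_\bullet\to M_1$ maps to a complex of projectives ${P_\bullet\choose\mathbf{F}P_\bullet}\to q(M_1)$ which is a resolution provided $L_i q=0$ for $i>0$, and the left derived functors of $q$ are computed by $L_i\mathbf{F}$ applied to projectives, whence their image under ${A\choose B}_\phi\mapsto B$ has finite pd bounded by $\pd_\mathcal{B}\mathbf{F}\mathcal{P(A)}$. Combining, $\pd_{(\mathbf{F},\mathcal{B})}q(M_1)$ is finite, and then $\pd_{(\mathbf{F},\mathcal{B})}{M_1\choose M_2}_\phi\le\max\{\pd{0\choose M_2}_0,\pd q(M_1)\}<\infty$.

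\textbf{Main obstacle.} The delicate point is the "if" direction for $q(M_1)$: because $\mathbf{F}$ is only right exact, $q$ does not preserve exact sequences, so one cannot simply transport a projective resolution of $M_1$. I expect the cleanest fix is to argue by induction on $\pd_\mathcal{A}M_1$ using the short exact sequence $0\to \Omega M_1\to P_0\to M_1\to 0$ with $P_0$ projective: apply $q$ and the snake lemma in $(\mathbf{F},\mathcal{B})$, noting $q(P_0)$ is projective and that the discrepancy from non-exactness lands in a subobject of the form ${0\choose(\text{something of finite }\mathcal{B}\text{-pd})}_0$, which we have already handled. This reduces everything to the base case and the hypothesis $\pd_\mathcal{B}\mathbf{F}\mathcal{P(A)}<\infty$, closing the argument.
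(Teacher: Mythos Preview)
Your ``only if'' direction is correct and coincides with the paper's argument: project a finite projective resolution in $(\mathbf{F},\mathcal{B})$ to each coordinate and use $\pd_{\mathcal{B}}\mathbf{F}\mathcal{P(A)}<\infty$.

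The ``if'' direction, however, is built on a short exact sequence that does not exist. A morphism ${M_1\choose M_2}_\phi \to {M_1\choose \mathbf{F}M_1}_{\mathrm{id}}$ whose first component is $\mathrm{id}_{M_1}$ must be a pair $(\mathrm{id}_{M_1},\sigma)$ with $\sigma\phi=\mathrm{id}_{\mathbf{F}M_1}$, so it exists only when $\phi$ is split mono. The sequence that is always exact is $0\to{0\choose M_2}\to{M_1\choose M_2}_\phi\to{M_1\choose 0}_0\to 0$, but its quotient is $j_*(M_1)$, not $j_!(M_1)=q(M_1)$, and $j_*$ does not send projectives to projectives. Your fallback induction for $\pd\,q(M_1)$ also does not close: applying $q$ to $0\to\Omega M_1\to P_0\to M_1\to 0$ gives a kernel ${\Omega M_1\choose K}$, and comparing with $q(\Omega M_1)$ the ``discrepancy'' is ${0\choose L_1\mathbf{F}(M_1)}$; but nothing in the hypothesis $\pd_{\mathcal{B}}\mathbf{F}\mathcal{P(A)}<\infty$ bounds $\pd_{\mathcal{B}}L_1\mathbf{F}(M_1)$ (a subquotient of objects of finite projective dimension need not have finite projective dimension). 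The paper bypasses all of this by running the induction on the \emph{whole} object rather than on $q(M_1)$: choose epimorphisms $Q_0\twoheadrightarrow M_1$ in $\mathcal{A}$ and $P_0\twoheadrightarrow M_2$ in $\mathcal{B}$, form the projective cover ${Q_0\choose P_0\oplus\mathbf{F}Q_0}\twoheadrightarrow{M_1\choose M_2}$, and note that its kernel ${K_1^1\choose K_1^2}$ again has $\pd_{\mathcal{A}}K_1^1<\infty$ and, from $0\to K_1^2\to P_0\oplus\mathbf{F}Q_0\to M_2\to 0$, also $\pd_{\mathcal{B}}K_1^2<\infty$. After $\pd_{\mathcal{A}}M_1$ steps the first coordinate becomes zero and one finishes with a finite projective resolution of an object of the form ${0\choose K}$.
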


\begin{proof}
Let $\pd_{(\mathbf{F},\mathcal{B})}{M_{1} \choose M_{2}}<\infty$. Then by Lemma \ref{lem3.1},
we have the following exact sequence of finite length
$$\xymatrix@C=0.5cm{
  0 \ar[r] & {0 \choose P_{n}}\oplus{Q_{n} \choose \mathbf{F}Q_{n}} \ar[r] & \cdots \ar[r]
  & {0 \choose P_{2}}\oplus{Q_{2} \choose \mathbf{F}Q_{2}} \ar[r]
  & {0 \choose P_{1}}\oplus{Q_{1} \choose \mathbf{F}Q_{1}} \ar[r]
  & {0 \choose P_{0}}\oplus{Q_{0} \choose \mathbf{F}Q_{0}} \ar[r] & {M_{1} \choose M_{2}} \ar[r] & 0 }$$
in $(\mathbf{F},\mathcal{B})$ with all $Q_i$ projective in $\mathcal{A}$ and all $P_i$ projective in $\mathcal{B}$.
Hence we have exact sequences
$$\xymatrix@C=1cm{
  0 \ar[r] &  Q_{n}\ar[r] & \cdots \ar[r]& Q_{2} \ar[r] & Q_{1} \ar[r] & Q_{0} \ar[r] & M_{1} \ar[r] & 0, }\eqno{(3.1)}$$
$$\xymatrix@C=0.5cm{
  0 \ar[r] &  P_{n}\oplus\mathbf{F}Q_{n}\ar[r] & \cdots \ar[r]& P_{2}\oplus\mathbf{F}Q_{2} \ar[r]
  & P_{1}\oplus\mathbf{F}Q_{1} \ar[r] & P_{0}\oplus\mathbf{F}Q_{0} \ar[r] & M_{2} \ar[r] & 0 }\eqno{(3.2)}$$
in $\mathcal{A}$ and $\mathcal{B}$ respectively. By (3.1), we have $\pd_{\mathcal{A}}M_{1}<\infty$.
Since $\pd_{\mathcal{B}}\mathbf{F}Q_i<\infty$ for any $0\leq i\leq n$ by assumption,
we have $\pd_{\mathcal{B}}M_{2}<\infty$ by (3.2).

Conversely, assume $\pd_{\mathcal{A}}M_{1}<\infty$ and $\pd_{\mathcal{B}}M_{2}<\infty$. Let
$$\xymatrix@C=1cm{
  0 \ar[r] &  Q_{n}\ar[r]^{\delta^{1}_{n}} & \cdots \ar[r]^{\delta^{1}_{3}} & Q_{2} \ar[r]^{\delta^{1}_{2}}
  & Q_{1} \ar[r]^{\delta^{1}_{1}} & Q_{0} \ar[r]^{\delta^{1}_{0}}  & M_{1} \ar[r] & 0 }$$
be a projective resolution of $M_{1}$ in $\mathcal{A}$. Then $\pd_{\mathcal{A}}K^{1}_{i}<\infty$,
where $K^{1}_{i}:=\Ker \delta^{1}_{i-1}$ for any $1\leq i\leq n+1$.
Fix a projective presentation $P_{0}\twoheadrightarrow M_{2}$
of $M_{2}$ in $\mathcal{B}$. Then we can construct a projective presentation ${Q_{0} \choose
P_{0}\oplus\mathbf{F}Q_{0} } \twoheadrightarrow {M_{1} \choose M_{2}}$ of ${M_{1} \choose M_{2}}$ in $(\mathbf{F},\mathcal{B})$.
If ${K^{1}_{1}\choose K^{2}_{1}}$ is its kernel, then there exists an exact sequence
$$\xymatrix@C=0.5cm{0 \ar[r] & K^{2}_{1} \ar[r] & P_{0}\oplus\mathbf{F}Q_{0} \ar[r] & M_{2} \ar[r] & 0 }$$
in $\mathcal{B}$. Because $\pd_{\mathcal{B}}\mathbf{F}Q_{0}<\infty$ by assumption, we have $\pd_{\mathcal{B}}K^{2}_{1}<\infty$.
Repeating this procedure, we get a projective resolution
$$\xymatrix@C=1cm{\cdots \ar[r]^(.3){\delta_{3}}& {0 \choose P_{2}}\oplus{Q_{2} \choose \mathbf{F}Q_{2}} \ar[r]^{\delta_{2}}
& {0 \choose P_{1}}\oplus{Q_{1} \choose \mathbf{F}Q_{1}} \ar[r]^{\delta_{1}}
& {0 \choose P_{0}}\oplus{Q_{0} \choose \mathbf{F}Q_{0}} \ar[r]^(.7){\delta_{0}} & {M_{1} \choose M_{2}} \ar[r] & 0 }$$
of ${M_{1} \choose M_{2}}$ in $(\mathbf{F},\mathcal{B})$
such that if ${K^{1}_{i} \choose K^{2}_{i}}$ is the kernel of $\delta_{i-1}$, then $\pd_{\mathcal{B}}K^{2}_{i}<\infty$.
Since $Q_{n+1}=0$, we have $\Ker \delta_{n} = {0 \choose K^{2}_{n+1}}$. As $\pd_{\mathcal{B}}K^{2}_{n+1}<\infty$,
we have a projective resolution
$$\xymatrix@C=1cm{
0 \ar[r] &  P_{n+m}\ar[r] & \cdots \ar[r]& P_{n+3} \ar[r] & P_{n+2} \ar[r]& P_{n+1} \ar[r]  & K^{2}_{n+1} \ar[r] & 0 }$$
of $K^{2}_{i+1}$ in $\mathcal{B}$, which induces the finite projective resolution
$$\xymatrix@C=1cm{
0 \ar[r] & {0 \choose P_{n+m}}\cdots \ar[r]& {0 \choose P_{n+3}}\ar[r] & {0 \choose P_{n+2}}\ar[r]
& {0 \choose P_{n+1}} \ar[r] & {0 \choose K^{2}_{n+1}} \ar[r] & 0 }$$
${0 \choose K^{2}_{n+1}}$ in $(\mathbf{F},\mathcal{B})$.
This means $\pd_{(\mathbf{F},\mathcal{B})}\Ker \delta_{n}=\pd_{(\mathbf{F},\mathcal{B})}{0 \choose K^{2}_{n+1}}<\infty$,
and hence $\pd_{(\mathbf{F},\mathcal{B})}{M_{1} \choose M_{2}}<\infty$.
\end{proof}

Motivated by the definition of compatible bimodules in \cite[Definition 1.1]{Z}, we introduce the following

\begin{defn}\label{def3.3}
The functor $\mathbf{F}$ is called $\emph{perfect}$ if the following two conditions are satisfied.
\begin{itemize}
\item[(P1)] If $\mathcal{Q^{\bullet}}$ is an exact sequence of projective objects in $\mathcal{A}$,
then $\mathbf{F}\mathcal{Q^{\bullet}}$ is exact.
\item[(P2)]If $\mathcal{P^{\bullet}}$ is a complete $\mathcal{B}$-projective resolution,
then $\Hom(\mathcal{P^{\bullet}},\mathbf{F}{Q})$ is exact for any $Q\in\mathcal{P(A)}$.
\end{itemize}
\end{defn}

For a ring $R$, $\Mod R$ is the category of left $R$-modules and $\mod R$ is the category of finitely generated left $R$-modules.
Let $\Lambda$ and $\Gamma$ be artin algebras. If $M$ is a compatible $(\Lambda,\Gamma)$-bimodule,
then the tensor functor $M\otimes_{\Gamma}-$ is perfect. Let $T:=\left(\begin{array}{cc}
\Lambda & M \\
0 & \Gamma\end{array}\right)$ be the upper triangular matrix algebra.
Then $\mod T$ is the left comma category $(M\otimes_{\Gamma}-,\mod \Lambda)$.

\begin{lem}\label{lem3.4}
The following statements are equivalent.
\begin{itemize}
\item[(1)] $\mathbf{F}$ satisfies {\rm (P2)};
\item[(2)] $\Ext_{\mathcal{B}}^{1}({G},\mathbf{F}{Q})=0$ for any ${G}\in \mathcal{GP}(\mathcal{B})$ and $Q\in\mathcal{P(A)}$;
\item[(3)] $\Ext_{\mathcal{B}}^{\geq 1}({G},\mathbf{F}{Q})=0$ for any ${G}\in \mathcal{GP}(\mathcal{B})$ and $Q\in\mathcal{P(A)}$.
\end{itemize}
\end{lem}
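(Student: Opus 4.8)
The plan is to prove the chain of implications $(3)\Rightarrow(2)\Rightarrow(1)\Rightarrow(3)$; the implications $(3)\Rightarrow(2)$ is trivial since $\Ext^{\geq 1}$ vanishing in particular gives $\Ext^1$ vanishing. So the substance lies in $(2)\Rightarrow(1)$ and $(1)\Rightarrow(3)$.

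For $(1)\Rightarrow(3)$, suppose $\mathbf{F}$ satisfies (P2). Let $G\in\mathcal{GP}(\mathcal{B})$ and $Q\in\mathcal{P(A)}$. By definition $G$ fits into a complete $\mathcal{B}$-projective resolution $\mathcal{P}^\bullet$, so that $G\cong\Im(P_0\to P^0)$ for projective objects $P_i,P^i$ in $\mathcal{B}$, and $\mathcal{P}^\bullet$ is $\Hom(-,\mathcal{P(B)})$-exact. First I would note that the deleted left half of $\mathcal{P}^\bullet$, namely $\cdots\to P_1\to P_0\to G\to 0$, is an ordinary projective resolution of $G$ in $\mathcal{B}$, so $\Ext_{\mathcal{B}}^{n}(G,\mathbf{F}Q)$ is computed as the cohomology of $\Hom(P_\bullet,\mathbf{F}Q)$ in the appropriate degree; and for $n\geq 1$ this cohomology equals the cohomology of the full complex $\Hom(\mathcal{P}^\bullet,\mathbf{F}Q)$ at the corresponding spot, using that the syzygies $\Im(P_i\to P_{i-1})$ along the left tail are again Gorenstein projective. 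Since (P2) says $\Hom(\mathcal{P}^\bullet,\mathbf{F}Q)$ is exact, all these cohomologies vanish, giving $\Ext_{\mathcal{B}}^{\geq 1}(G,\mathbf{F}Q)=0$.

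For $(2)\Rightarrow(1)$, I want to show $\Hom(\mathcal{P}^\bullet,\mathbf{F}Q)$ is exact for every complete $\mathcal{B}$-projective resolution $\mathcal{P}^\bullet$ and every $Q\in\mathcal{P(A)}$. Write $\mathcal{P}^\bullet$ as $\cdots\to P_1\to P_0\xrightarrow{d} P^0\to P^1\to\cdots$ and let $G_i$ denote the relevant cocycle/cycle objects; each $G_i$ is Gorenstein projective in $\mathcal{B}$ (being an image of a map in a complete projective resolution, or a syzygy thereof). For exactness of $\Hom(\mathcal{P}^\bullet,\mathbf{F}Q)$ at a given spot, break $\mathcal{P}^\bullet$ into the short exact sequences $0\to G_{i+1}\to P^i\to G_i\to 0$ (and similarly on the left tail) and apply $\Hom(-,\mathbf{F}Q)$: exactness at that spot follows once $\Ext_{\mathcal{B}}^{1}(G_i,\mathbf{F}Q)=0$ for all the occurring Gorenstein projective objects $G_i$, together with the fact that $P^i$ and $P_i$ are projective in $\mathcal{B}$ so that $\Hom(P^i,-)$ and $\Hom(P_i,-)$ are exact. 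Hypothesis (2) supplies exactly these $\Ext^1$-vanishings. Assembling the pieces along the doubly-infinite complex yields exactness of $\Hom(\mathcal{P}^\bullet,\mathbf{F}Q)$ throughout, which is (P2).

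The main obstacle is the bookkeeping in $(2)\Rightarrow(1)$: one must check that \emph{all} of the syzygy objects appearing in a complete projective resolution — on both the projective-resolution side and the coresolution side — are genuinely Gorenstein projective in $\mathcal{B}$, so that the single hypothesis $\Ext^1_{\mathcal{B}}(-,\mathbf{F}Q)=0$ on $\mathcal{GP}(\mathcal{B})$ applies uniformly, and that dimension-shifting correctly identifies the cohomology of the long complex with $\Ext^1$ of the appropriate syzygy (rather than some higher $\Ext$, which the hypothesis in its weak form $(2)$ does not directly control). This is where the standard fact that $\mathcal{GP}(\mathcal{B})$ is closed under taking syzygies and cosyzygies within complete resolutions, and that $\Ext^{\geq 1}_{\mathcal{B}}(G,-)$ for $G$ Gorenstein projective is detected by $\Ext^1$ of its syzygies, does the real work.
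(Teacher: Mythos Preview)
Your proposal is correct and follows essentially the same approach as the paper. The paper's proof is extremely terse: it declares $(1)\Rightarrow(3)\Rightarrow(2)$ trivial and dispatches $(2)\Rightarrow(1)$ in a single sentence (``apply $\Hom_{\mathcal{B}}(-,\mathbf{F}Q)$ to a complete $\mathcal{B}$-projective resolution''), whereas you spell out the dimension-shifting and the syzygy bookkeeping explicitly. One minor remark: in your argument for $(1)\Rightarrow(3)$, the identification of $\Ext^n_{\mathcal{B}}(G,\mathbf{F}Q)$ with the cohomology of $\Hom(\mathcal{P}^\bullet,\mathbf{F}Q)$ in degree $n\geq 1$ does not actually require knowing the syzygies are Gorenstein projective---it follows simply because the deleted projective resolution and the full complete resolution agree in those degrees---so that clause can be dropped without loss.
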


\begin{proof}
The implications $(1)\Rightarrow(3)\Rightarrow(2)$ are trivial.
Applying the functor $\Hom_{\mathcal{B}}(-,\mathbf{F}{Q})$ to a complete $\mathcal{B}$-projective resolution of $G$,
we get $(2)\Rightarrow(1)$.
\end{proof}

We now give an equivalent characterization of Gorenstein projective objects in the left comma category $(\mathbf{F},\mathcal{B})$.
It is a generalization of \cite[Theorem 1.4]{Z}.

\begin{thm}\label{th3.5}
If $\mathbf{F}$ is perfect,
then the following statements are equivalent for an object ${{Y} \choose{X}}_{\phi}$ in $(\mathbf{F},\mathcal{B})$.
\begin{itemize}
\item[(1)] ${{Y} \choose{X}}_{\phi}\in \mathcal{GP}((\mathbf{F},\mathcal{B}))$.
\item[(2)] $\phi:\mathbf{F}{Y}\rightarrow {X}$ is injective in $\mathcal{B}$,
$\Coker\phi\in\mathcal{GP}(\mathcal{B})$ and ${Y}\in\mathcal{GP}(\mathcal{A})$.
\end{itemize}
In this case, ${X}\in\mathcal{GP}(\mathcal{B})$ if and only if
$\mathbf{F}{Y}\in\mathcal{GP}(\mathcal{B})$.
\end{thm}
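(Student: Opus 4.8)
\section*{Proof proposal for Theorem \ref{th3.5}}

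The plan is to prove the equivalence $(1)\Leftrightarrow(2)$ and the final ``in this case'' claim in sequence, using the two generalized horseshoe lemmas of Lemma \ref{lem2.1} together with the structure of projectives in $(\mathbf{F},\mathcal{B})$ given by Lemma \ref{lem3.1} and the perfectness hypothesis via Lemma \ref{lem3.4}. For the direction $(2)\Rightarrow(1)$, I would start from a complete $\mathcal{A}$-projective resolution $\mathcal{Q}^{\bullet}$ of $Y$ and a complete $\mathcal{B}$-projective resolution $\mathcal{P}^{\bullet}$ of $\Coker\phi$. Condition (P1) guarantees that $\mathbf{F}\mathcal{Q}^{\bullet}$ is an exact complex of projectives in $\mathcal{B}$; splicing it with $\mathcal{P}^{\bullet}$ via the horseshoe lemma (Lemma \ref{lem2.1}), using that $\Ext^1$ of a Gorenstein projective object against $\mathbf{F}Q$ vanishes by Lemma \ref{lem3.4}, produces a $\Hom(-,\mathcal{P(B)})$-exact exact complex of projectives in $\mathcal{B}$ whose relevant image is $X$ (here the short exact sequence $0\to \mathbf{F}Y\xrightarrow{\phi} X\to \Coker\phi\to 0$ is the base case). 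Combining the resulting resolutions componentwise in $(\mathbf{F},\mathcal{B})$ and using the explicit shape ${0\choose Q}\oplus{P\choose \mathbf{F}P}$ of projectives there, I obtain a candidate complete projective resolution of ${Y\choose X}_\phi$; the $\Hom(-,\mathcal{P}((\mathbf{F},\mathcal{B})))$-exactness reduces, via the adjunction/decomposition of $\Hom$ in the comma category, to the $\Hom$-exactness of the two constituent resolutions, which we already have.

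For $(1)\Rightarrow(2)$, I would take a complete projective resolution of ${Y\choose X}_\phi$ in $(\mathbf{F},\mathcal{B})$, write each term as ${0\choose Q_i}\oplus{P_i\choose \mathbf{F}P_i}$ by Lemma \ref{lem3.1}, and project onto the two coordinates. The $\mathcal{A}$-coordinate immediately yields an exact complex of projectives in $\mathcal{A}$ realizing $Y$, and checking $\Hom(-,\mathcal{P(A)})$-exactness of it gives $Y\in\mathcal{GP}(\mathcal{A})$. The harder coordinate is $\mathcal{B}$: the exactness of the comma-category complex, together with the fact that in each object ${A\choose B}_\psi$ the morphism data interlocks the two rows, should force $\phi$ to be injective (this is where one uses that $\mathbf{F}$ applied to the projective resolution of $Y$ stays exact, i.e.\ (P1), so that $\mathbf{F}Y$ sits inside the projective $\mathbf{F}P_0$-type terms), and then $\Coker\phi$ is computed as the $\mathcal{B}$-image of the quotient complex; one checks it is Gorenstein projective by verifying the complete-resolution conditions, invoking Lemma \ref{lem3.4} to handle the $\Hom$-exactness against $\mathbf{F}Q$ summands.

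For the final statement, assume $(1)$–$(2)$ hold, so we have the short exact sequence $0\to \mathbf{F}Y\xrightarrow{\phi} X\to \Coker\phi\to 0$ with $\Coker\phi\in\mathcal{GP}(\mathcal{B})$. Since $\mathcal{GP}(\mathcal{B})$ is closed under extensions (it is a Frobenius, hence in particular resolving-type, subcategory) and under kernels of epimorphisms between its objects, one direction is: if $\mathbf{F}Y\in\mathcal{GP}(\mathcal{B})$ then $X\in\mathcal{GP}(\mathcal{B})$ by the extension-closed property; conversely, if $X\in\mathcal{GP}(\mathcal{B})$, then since $\Coker\phi\in\mathcal{GP}(\mathcal{B})$ and the class $\mathcal{GP}(\mathcal{B})$ is closed under kernels of epimorphisms (using $\Ext^1(\Coker\phi,\mathcal{P(B)})=0$ to see the needed sequences remain $\Hom(-,\mathcal{P(B)})$-exact), we get $\mathbf{F}Y=\Ker(X\twoheadrightarrow\Coker\phi)\in\mathcal{GP}(\mathcal{B})$.

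I expect the main obstacle to be the $\mathcal{B}$-coordinate analysis in $(1)\Rightarrow(2)$: extracting from an exact complex in the comma category the precise claim that $\phi$ is a monomorphism and that $\Coker\phi$ is Gorenstein projective requires carefully tracking how the connecting maps $\sigma^i$ (in the notation of Lemma \ref{lem2.1}) behave and verifying $\Hom$-exactness after the splitting, rather than just exactness; the perfectness condition (P1) together with Lemma \ref{lem3.4} is exactly what is needed to push this through, and assembling these ingredients cleanly is the technical heart of the argument.
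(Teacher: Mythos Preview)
Your proposal is correct and follows essentially the same route as the paper: for $(2)\Rightarrow(1)$ the paper likewise applies the generalized horseshoe Lemma~\ref{lem2.1} to $0\to\mathbf{F}Y\to X\to\Coker\phi\to 0$ and checks $\Hom$-exactness by applying $\Hom(L^\bullet,-)$ to the short exact sequence $0\to{0\choose\tilde P\oplus\mathbf{F}\tilde Q}\to{\tilde Q\choose\tilde P\oplus\mathbf{F}\tilde Q}\to{\tilde Q\choose 0}\to 0$, while for $(1)\Rightarrow(2)$ it projects to the two coordinates and extracts the injectivity of $\phi$ together with $\Coker\phi\cong\Ker d^0$ via the snake lemma applied to $0\to\mathbf{F}Q^\bullet\to V^\bullet\to P^\bullet\to 0$. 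One small slip in your write-up: $\mathbf{F}Q^i$ need not be projective in $\mathcal{B}$, so the intermediate $\mathcal{B}$-complex is neither a complex of projectives nor $\Hom(-,\mathcal{P(B)})$-exact in general, but this is harmless since what you actually need (and correctly use) is that ${Q^i\choose P^i\oplus\mathbf{F}Q^i}$ is projective in $(\mathbf{F},\mathcal{B})$.
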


\begin{proof}
$(2)\Rightarrow (1)$ Assume that $\phi:\mathbf{F}{Y}\rightarrow {X}$ is injective in $\mathcal{B}$, $\Coker\phi\in\mathcal{GP}(\mathcal{B})$
and $Y\in\mathcal{GP}(\mathcal{A})$. Then we have a complete $\mathcal{A}$-projection resolution
\begin{center}$(\mathcal{Q^{\bullet}}, q^{\cdot}):=\xymatrix{\cdots \ar[r]&Q^{-1}\ar[r]&Q^{0}\ar[r]^{q^{0}}&Q^{1}\ar[r]&\cdots}$\end{center}
with $Y=\Ker q^{0}$. Since $\mathbf{F}\mathcal{Q^{\bullet}}$ is exact by (P1), we have the following exact sequence
\begin{center}$\xymatrix{0 \ar[r]&\mathbf{F}Y\ar[r]&\mathbf{F}Q^{0}\ar[r]^{Fq^{0}}&\mathbf{F}Q^{1}\ar[r]^{Fq^{1}}&\cdots}$.\end{center}
Since $\Coker\phi\in\mathcal{GP}(\mathcal{B})$, we have a complete $\mathcal{B}$-projective resolution
\begin{center}$\xymatrix{\cdots \ar[r]&P^{-1}\ar[r]&P^{0}\ar[r]^{d^{0}}&P^{1}\ar[r]&\cdots}$\end{center}
with $\Coker\phi=\Ker d^{0}$, so $\Ker d^{i}\in\mathcal{GP}(\mathcal{B})$, and hence $\Ext_{\mathcal{B}}^{1}(\Ker d^{i},\mathbf{F}Q^{i})=0$
for any $i\geq 0$. Applying Lemma \ref{lem2.1}(1) to the exact sequence
$$\xymatrix{0\ar[r]&\mathbf{F}Y\ar[r]^{\phi}&X\ar[r]&\Coker\phi\ar[r]&0},$$ we obtain an exact sequence
$$\xymatrix{0\ar[r]&X\ar[r]^{\partial^{-1}}&
P^{0}\oplus\mathbf{F}Q^{0}\ar[r]^{\partial^{0}}&P^{1}\oplus\mathbf{F}Q^{1}\ar[r]^{\partial^{1}}&\cdots}$$
with $\partial^{i}=\left(\begin{array}{cc}
 d^{i}&0\\
 \sigma^{i}&Fq^{i}
 \end{array}\right)$ and $\sigma^{i}:P^{i}\rightarrow FQ^{i+1}$ for any $i\geq 0$, such that the following diagram with exact rows
\begin{center}$\xymatrix{0\ar[r]&\mathbf{F}Y\ar[r]\ar[d]&\mathbf{F}Q^{0}\ar[r]\ar[d]&\mathbf{F}Q^{1}\ar[r]\ar[d]&\cdots\\
                        0\ar[r]&X\ar[r]&P^{0}\oplus\mathbf{F}Q^{0}\ar[r]&P^{1}\oplus\mathbf{F}Q^{1}\ar[r]&\cdots}$\end{center}
commutes. By a dual argument we get the following diagram with exact rows
\begin{center}$\xymatrix{\cdots\ar[r]&\mathbf{F}Q^{-2}\ar[r]\ar[d]&\mathbf{F}Q^{-1}\ar[r]\ar[d]&\mathbf{F}Y\ar[r]\ar[d]&0\\
                        \cdots\ar[r]&P^{-2}\oplus\mathbf{F}Q^{-2}\ar[r]&P^{-1}\oplus\mathbf{F}Q^{-1}\ar[r]&X\ar[r]&0.}$\end{center}
Combining these two diagrams to get the following diagram with exact rows
\begin{center}$\xymatrix{\cdots\ar[r]&\mathbf{F}Q^{-1}\ar[r]\ar[d]&\mathbf{F}Q^{0}\ar[r]\ar[d]&\mathbf{F}Q^{1}\ar[r]\ar[d]&\cdots\\
\cdots\ar[r]&P^{-1}\oplus\mathbf{F}Q^{-1}\ar[r]&P^{0}\oplus\mathbf{F}Q^{0}\ar[r]&P^{1}\oplus\mathbf{F}Q^{1}\ar[r]&\cdots.}$\end{center}
Actually, we have the following exact sequence of projective objects
\begin{center}$\xymatrix{L^{\bullet}=\cdots\ar[r]&{Q^{-1} \choose P^{-1}\oplus\mathbf{F}Q^{-1}}
\ar[r]&{Q^{0} \choose P^{0}\oplus\mathbf{F}Q^{0}}\ar[r]&{Q^{1} \choose P^{1}\oplus\mathbf{F}Q^{1}}\ar[r]&\cdots}$\end{center}
in $(\mathbf{F},\mathcal{B})$. Since each $L^{i}$ is a projective object in $(\mathbf{F},\mathcal{B})$,
applying $\Hom(L^{i},-)$ to the exact sequence:
\begin{center}$\xymatrix{0\ar[r]&{0 \choose \widetilde{P}\oplus \widetilde{FQ}}
\ar[r]&{\widetilde{Q} \choose \widetilde{P}\oplus \widetilde{FQ}}\ar[r]&{\widetilde{Q} \choose 0}\ar[r]&0}$\end{center}
we get the following exact sequence of complexes
$$ \xymatrix@C=0.5cm{
0 \ar[r] & \Hom(L^{\bullet},{0 \choose \widetilde{P}\oplus \widetilde{FQ}}) \ar[r]
& \Hom(L^{\bullet},{\widetilde{Q} \choose \widetilde{P}\oplus \widetilde{FQ}})
\ar[r] & \Hom(L^{\bullet},{\widetilde{Q} \choose 0}) \ar[r] & 0, }$$
that is,
$$ \xymatrix@C=0.5cm{
0 \ar[r] & \Hom(P^{\bullet},\widetilde{P}\oplus \widetilde{FQ}) \ar[r]
& \Hom(L^{\bullet},{\widetilde{Q} \choose \widetilde{P}\oplus \widetilde{FQ}}) \ar[r]
& \Hom(Q^{\bullet},\widetilde{Q}) \ar[r] & 0. }$$
Since $P^{\bullet}$ is a complete $\mathcal{B}$-projective resolution, it follows that $\Hom(P^{\bullet},\widetilde{P})$ is exact.
By (P2), $\Hom(P^{\bullet},\widetilde{FQ})$ is exact. Since $Q^{\bullet}$ is a complete $\mathcal{A}$-projective resolution,
it follows that $\Hom(Q^{\bullet},\widetilde{Q})$ is exact. Thus $\Hom(L^{\bullet},{\widetilde{Q} \choose \widetilde{P}\oplus \widetilde{FQ}})$
is also exact. Therefore we conclude that $L^{\bullet}$ is a complete $(\mathbf{F},\mathcal{B})$-projective resolution and
${{Y} \choose {X}}_{\phi}\in\mathcal{GP}((\mathbf{F},\mathcal{B}))$.

$(1)\Rightarrow (2)$ Let ${{Y} \choose {X}}_{\phi}\in\mathcal{GP}((\mathbf{F},\mathcal{B}))$.
Then we have a complete $(\mathbf{F},\mathcal{B})$-projective resolution
$$\xymatrix{L^{\bullet}:=\cdots\ar[r]&{Q^{-1} \choose P^{-1}\oplus\mathbf{F}Q^{-1}}\ar[r]
&{Q^{0} \choose P^{0}\oplus\mathbf{F}Q^{0}}\ar[r]^{d'^{0} \choose \partial^{0}}
&{Q^{1} \choose P^{1}\oplus\mathbf{F}Q^{1}}\ar[r]&\cdots}$$
such that $\Ker{d'^{0} \choose \partial^{0}}={{Y} \choose {X}}_{\phi}$. Then we get an exact sequence
$(Q^{\bullet},d'^{\bullet})$ of projective objects in $\mathcal{A}$ with $\Ker d'^{0}=Y$ and the following exact sequence
$$\xymatrix{V^{\bullet}:=\cdots\ar[r]&P^{-1}\oplus\mathbf{F}Q^{-1}\ar[r]
&P^{0}\oplus\mathbf{F}Q^{0}\ar[r]^{\partial^{0}}&P^{1}\oplus\mathbf{F}Q^{1}\ar[r]&\cdots}$$
with $\Ker\partial^{0}=X$. By (P1), $\mathbf{F}Q^{\bullet}$ is exact. Since
${d'^{i} \choose \partial^{i}}:{Q^{i} \choose P^{i}\oplus\mathbf{F}Q^{i}}\rightarrow{Q^{i+1} \choose P^{i+1}\oplus\mathbf{F}Q^{i+1}}$
is a morphism in $(\mathbf{F},\mathcal{B})$, we get that $\partial^{i}$ is of the form
$\partial^{i}=\left(\begin{array}{cc}
 d^{i}& 0 \\
\sigma^{i} & \mathbf{F}d'^{i}\end{array}\right)$
where $\sigma^{i}:P^{i}\rightarrow\mathbf{F}Q^{i+1}$ for any $i$.
We have the exact sequence of complexes
$$\xymatrix@C=0.5cm{
0 \ar[r] & \mathbf{F}Q^{\bullet} \ar[r] & V^{\bullet} \ar[r] & P^{\bullet} \ar[r] & 0 }$$ with $P^{\bullet}$ exact.
So we get the following diagram with exact columns and rows
$$\xymatrix{0\ar[r]&\mathbf{F}Q^{0}\ar[r]\ar[d]^{\mathbf{F}d'^{0}}&P^{0}\oplus\mathbf{F}Q^{0}\ar[r]\ar[d]^{\partial^{0}}&P^{0}\ar[r]\ar[d]^{d^{0}}&0\\
            0\ar[r]&\mathbf{F}Q^{1}\ar[r]\ar[d]^{\mathbf{F}d'^{1}}&P^{1}\oplus\mathbf{F}Q^{1}\ar[r]\ar[d]^{^{\partial^{1}}}&P^{1}\ar[r]\ar[d]^{d^{1}}&0\\
            0\ar[r]&\mathbf{F}Q^{2}\ar[r]\ar[d]&P^{2}\oplus\mathbf{F}Q^{2}\ar[r]\ar[d]&P^{2}\ar[r]\ar[d]&0\\
           &\vdots&\vdots&\vdots& }$$
such that $\Ker\mathbf{F}d'^{0}=\mathbf{F}Y$. Applying the snake lemma we get the following exact sequence
$$\xymatrix@C=0.5cm{
0 \ar[r] & \Ker \mathbf{F}d'^{0} \ar[r] &}\xymatrix@C=0.5cm{\Ker \partial^{0} \ar[r]
& \Ker d^{0} \ar[r] & \Im \mathbf{F}d'^{1} \ar[r] & \Im \partial^{1} \ar[r] &\Im d^{1}  \ar[r] & 0, }$$
that is, $$\xymatrix@C=0.5cm{
  0 \ar[r] & \mathbf{F}Y \ar[r]^{\phi} & X \ar[r]^{\pi'} & \Ker d^{0} \ar[r]
  & \Im \mathbf{F}d'^{1} \ar[r] &}\xymatrix@C=0.5cm{\Im \partial^{1} \ar[r] &\Im d^{1}  \ar[r] & 0. }$$
Because the morphism $\Im \mathbf{F}d'^{1}\rightarrow \Im \partial^{1}$ is injective, it follows that $\pi'$ is surjective.
Hence $\Ker d^{0}\cong \Coker \phi$.   Since $\Hom (L^{\bullet},{0 \choose \mathcal{P}})\cong \Hom(P^{\bullet},\mathcal{P})$
and $L^{\bullet}$ is a complete projection resolution, it follows that $\Hom(P^{\bullet},\mathcal{P})$ is exact.
Hence $P^{\bullet}$ is a complete $\mathcal{B}$-projective resolution and  $\Coker\phi\in\mathcal{GP}(\mathcal{B})$.
By (P2), $\Hom(P^{\bullet},\mathbf{F}Q)$ is exact. Similarly, since each $L^{i}$ is a projective object in
$(\mathbf{F},\mathcal{B})$, applying $\Hom(L^{i},-)$ to the exact sequence
\begin{center}$\xymatrix{0\ar[r]&{0 \choose \widetilde{P}\oplus \widetilde{FQ}}\ar[r]
&{\widetilde{Q} \choose \widetilde{P}\oplus \widetilde{FQ}}\ar[r]&{\widetilde{Q} \choose 0}\ar[r]&0,}$\end{center}
we get the following exact sequence of complexes
$$ \xymatrix@C=0.5cm{
  0 \ar[r] & \Hom(P^{\bullet},\widetilde{P}\oplus \widetilde{FQ}) \ar[r]
  & \Hom(L^{\bullet},{\widetilde{Q} \choose \widetilde{P}\oplus \widetilde{FQ}}) \ar[r]
  & \Hom(Q^{\bullet},\widetilde{Q}) \ar[r] & 0. }$$
Since $L^{\bullet}$ is a complete projective resolution,
$\Hom(L^{\bullet},{\widetilde{Q} \choose \widetilde{P}\oplus \widetilde{FQ}})$ is exact,
and then $\Hom(Q^{\bullet},\widetilde{Q})$ is also exact. It follows that $Y \in\mathcal{GP}(\mathcal{A})$.
\end{proof}

As an application of Theorem \ref{th3.5}, we have the following

\begin{cor}\label{cor3.6}
Let $\mathbf{F}$ be perfect. Then
\begin{itemize}
\item[(1)] If $(\mathbf{F},\mathcal{B})$ has finitely many isomorphism classes of indecomposable
Gorenstein projective objects, then so have $\mathcal{A}$ and $\mathcal{B}$.
\item[(2)] If $\mathcal{GP}(\mathcal{B})=\mathcal{P}(\mathcal{B})$, then
${0 \choose P}$ and ${Y \choose \mathbf{F}Y}$ are exactly all indecomposable Gorenstein projective
objects in $(\mathbf{F},\mathcal{B})$, where $Y$ runs over all indecomposable objects in
$\mathcal{GP}(\mathcal{A})$ and $P$ runs over all indecomposable objects in $\mathcal{P}(\mathcal{B})$.
\item[(3)] If $\mathcal{GP}(\mathcal{A})=\mathcal{P}(\mathcal{A})$,
then ${0 \choose X}$ and ${Q \choose \mathbf{F}Q}$ are exactly all indecomposable Gorenstein projective
objects in $(\mathbf{F},\mathcal{B})$, where $Q$ runs over all indecomposable objects in $\mathcal{P}(\mathcal{A})$
and $X$ runs over all the indecomposable objects in $\mathcal{GP}(\mathcal{B})$.
\end{itemize}
\end{cor}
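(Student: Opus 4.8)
\textbf{Proof proposal for Corollary \ref{cor3.6}.}

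The plan is to deduce all three statements directly from the equivalence in Theorem \ref{th3.5}, which says that ${{Y} \choose{X}}_{\phi}\in\mathcal{GP}((\mathbf{F},\mathcal{B}))$ if and only if $\phi$ is injective, $\Coker\phi\in\mathcal{GP}(\mathcal{B})$ and $Y\in\mathcal{GP}(\mathcal{A})$. First I would record the two transparent constructions that feed this criterion: for any $Y\in\mathcal{GP}(\mathcal{A})$, the object ${Y \choose \mathbf{F}Y}_{\mathrm{id}}$ lies in $\mathcal{GP}((\mathbf{F},\mathcal{B}))$ (here $\phi=\mathrm{id}_{\mathbf{F}Y}$ is injective with zero cokernel), and for any $X\in\mathcal{GP}(\mathcal{B})$, the object ${0 \choose X}_{0}$ lies in $\mathcal{GP}((\mathbf{F},\mathcal{B}))$ (here $\phi:\mathbf{F}0=0\to X$ is injective with cokernel $X$). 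Conversely, if ${Y \choose X}_{\phi}$ is Gorenstein projective, Theorem \ref{th3.5} gives the short exact sequence $0\to \mathbf{F}Y\xrightarrow{\phi} X\to\Coker\phi\to 0$ in $\mathcal{B}$ with $Y\in\mathcal{GP}(\mathcal{A})$ and $\Coker\phi\in\mathcal{GP}(\mathcal{B})$.

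For (1), I would argue contrapositively on cardinalities of iso-classes of indecomposables. Suppose $\mathcal{A}$ has infinitely many pairwise non-isomorphic indecomposable Gorenstein projective objects $Y_t$. Passing to indecomposable summands of each ${Y_t \choose \mathbf{F}Y_t}_{\mathrm{id}}$ (which is Gorenstein projective by the above), I obtain Gorenstein projective objects of $(\mathbf{F},\mathcal{B})$; the point is that the exact functor sending ${Y \choose X}_{\phi}$ to $Y$ (the first coordinate) is additive and carries ${Y_t \choose \mathbf{F}Y_t}_{\mathrm{id}}$ to $Y_t$, so infinitely many distinct indecomposables $Y_t$ must arise from infinitely many distinct indecomposable summands in $(\mathbf{F},\mathcal{B})$ — giving a contradiction. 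Symmetrically, if $\mathcal{B}$ has infinitely many indecomposable Gorenstein projectives $X_s$, apply the same reasoning to the objects ${0 \choose X_s}_{0}$ using the second-coordinate functor ${Y \choose X}_{\phi}\mapsto X$. This shows finiteness in $(\mathbf{F},\mathcal{B})$ forces finiteness in both $\mathcal{A}$ and $\mathcal{B}$.

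For (2), assume $\mathcal{GP}(\mathcal{B})=\mathcal{P}(\mathcal{B})$. Then for a Gorenstein projective ${Y \choose X}_{\phi}$ the sequence $0\to\mathbf{F}Y\xrightarrow{\phi}X\to\Coker\phi\to 0$ has projective cokernel, hence splits, so $X\cong\mathbf{F}Y\oplus\Coker\phi$ with $\Coker\phi$ projective in $\mathcal{B}$. A short computation with the morphisms then shows ${Y \choose X}_{\phi}\cong{Y \choose \mathbf{F}Y}_{\mathrm{id}}\oplus{0 \choose \Coker\phi}_{0}$: the projection onto and inclusion of these summands are genuine morphisms in $(\mathbf{F},\mathcal{B})$ because the relevant squares commute once $X$ is identified with $\mathbf{F}Y\oplus\Coker\phi$ and $\phi$ with the canonical inclusion. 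Combining this decomposition with the fact that both ${Y \choose \mathbf{F}Y}_{\mathrm{id}}$ and ${0 \choose P}_{0}$ are Gorenstein projective, and that an object is indecomposable Gorenstein projective iff it is an indecomposable summand of a Gorenstein projective, we get that the indecomposable Gorenstein projective objects of $(\mathbf{F},\mathcal{B})$ are exactly those of the form ${0 \choose P}$ with $P$ indecomposable projective in $\mathcal{B}$, or ${Y \choose \mathbf{F}Y}$ with $Y$ indecomposable in $\mathcal{GP}(\mathcal{A})$ — one should note ${Y \choose \mathbf{F}Y}_{\mathrm{id}}$ is indecomposable when $Y$ is, since an idempotent splitting of it would split $Y$. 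Statement (3) is the mirror image: assume $\mathcal{GP}(\mathcal{A})=\mathcal{P}(\mathcal{A})$, so in a Gorenstein projective ${Y \choose X}_{\phi}$ we have $Y=Q$ projective in $\mathcal{A}$; the same splitting argument yields ${Q \choose X}_{\phi}\cong{Q \choose \mathbf{F}Q}_{\mathrm{id}}\oplus{0 \choose \Coker\phi}_{0}$ with $\Coker\phi\in\mathcal{GP}(\mathcal{B})$, giving the stated list of indecomposables.

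The only genuinely non-formal point is verifying the isomorphism ${Y \choose X}_{\phi}\cong{Y \choose \mathbf{F}Y}_{\mathrm{id}}\oplus{0 \choose \Coker\phi}_{0}$ once the extension splits — that is, checking that the coordinate-wise maps really assemble into morphisms of the comma category (commuting squares) and are mutually inverse. This is a direct diagram chase using the splitting $X\cong\mathbf{F}Y\oplus\Coker\phi$ and compatibility of $\phi$ with it, so I expect it to be routine; everything else is bookkeeping with additive functors and indecomposable summands.
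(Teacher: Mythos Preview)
Your approach to (1) and (2) matches the paper's, just with more detail filled in; the paper's proof is quite terse but follows the same line via Theorem \ref{th3.5}.

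There is, however, a genuine gap in your treatment of (3). You write that ``the same splitting argument'' as in (2) applies, but it does not: in (2) the sequence $0\to\mathbf{F}Y\to X\to\Coker\phi\to 0$ splits because $\Coker\phi$ is \emph{projective} in $\mathcal{B}$, while in (3) the cokernel is only Gorenstein projective and $\mathbf{F}Q$ need not be injective in $\mathcal{B}$. Nothing you have said forces the extension to split. The paper obtains the splitting by invoking Lemma \ref{lem3.4}: since $Q\in\mathcal{P}(\mathcal{A})$ and $\Coker\phi\in\mathcal{GP}(\mathcal{B})$, condition (P2) in the definition of a perfect functor gives $\Ext_{\mathcal{B}}^{1}(\Coker\phi,\mathbf{F}Q)=0$, and this is exactly what makes the sequence split. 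So (3) is not a formal mirror of (2); the perfectness hypothesis enters in an essential way through (P2), and you should make this explicit.
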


\begin{proof}
$(1)$ Let $X\in\mathcal{GP}(\mathcal{B})$ and $Y\in\mathcal{GP}(\mathcal{A})$. Then by Theorem \ref{th3.5},
both ${0 \choose X}$ and ${Y \choose \mathbf{F}Y}$ are Gorenstein projective objects in $(\mathbf{F},\mathcal{B})$.
The assertion follows.

$(2)+(3)$ Let ${Y \choose X}_{\phi}$ be Gorenstein projective in $(\mathbf{F},\mathcal{B})$.
Then by Theorem \ref{th3.5}, there exists an exact sequence
$$\xymatrix{0\ar[r]&\mathbf{F}Y\ar[r]^{\phi}&X\ar[r]&\Coker\phi\ar[r]&0}$$
in $\mathcal{B}$ with $\Coker\phi\in\mathcal{GP}(\mathcal{B})$ and ${Y}\in\mathcal{GP}(\mathcal{A})$.

If $\mathcal{GP}(\mathcal{B})=\mathcal{P}(\mathcal{B})$, then $\Coker\phi \in\mathcal{P}(\mathcal{B})$
and the above exact sequence splits. If $\mathcal{GP}(\mathcal{A})=\mathcal{P}(\mathcal{A})$, then ${Y}\in\mathcal{P}(\mathcal{A})$.
By Lemma \ref{lem3.4}, we have $\Ext_{\mathcal{B}}^{\geq 1}({\Coker\phi},\mathbf{F}{Y})=0$.
So the above exact sequence also splits. So, in both cases, we have
$X=\mathbf{F}Y\oplus\Coker \phi$ and ${Y \choose X}_{\phi}={Y \choose \mathbf{F}Y}\oplus{0\choose \Coker \phi}$.
The assertions (2) and (3) follow.
\end{proof}

\begin{example}\label{exa3.7}
Let $k$ be a field and $T$ a finite-dimensional $k$-algebra given by the quiver
$$\xymatrix@C=15pt@R=8pt{&1\ar@(ur,ul)[]|{\gamma}\\
3\ar[r]&2\ar[u]&4\ar[l]}$$ with relation $\gamma^{3}=0$. Then
$$T=\left(\begin{array}{cc}e_{1}Te_{1} & e_{1}T(1-e_{1}) \\
0 & (1-e_{1})T(1-e_{1})\end{array}\right),$$
where $e_{1}$ is the idempotent corresponding to the vertex 1. We have that
$\Gamma:=(1-e_{1})T(1-e_{1})$ is a finite-dimensional $k$-algebra given by the quiver
$$\xymatrix@C=15pt@R=12pt{3\ar[r]&2&4\ar[l]},$$
and $\Lambda:= e_{1}Te_{1}$ is a finite-dimensional $k$-algebra given by the quiver
$$\xymatrix@C=15pt@R=12pt{1\ar@(ur,ul)[]|{\gamma}}$$
with relation $\gamma^{3}=0$.
Take $\mathcal{A}:=\mod \Gamma$, $\mathcal{B}:=\mod \Lambda$ and $\mathbf{F}:=M\otimes_{\Gamma}-$
with $M=e_{1}T(1-e_{1})$. Then $(\mathbf{F},\mathcal{B})=\mod T$.
We have $_{\Lambda}M \cong {_{\Lambda}\Lambda\oplus_{\Lambda}\Lambda\oplus_{\Lambda}\Lambda}$
and $M_{\Gamma} \cong I(2)_{\Gamma}\oplus I(2)_{\Gamma}\oplus I(2)_{\Gamma}$. Since $\Gamma$ is hereditary,
$\pd M_{\Gamma} \leq 1$ and $\mathbf{F}$ is perfect. Since $\Lambda$ is self-injective, each module in
$\mod \Lambda$ is Gorenstein projective. Then by Corollary \ref{cor3.6},
all indecomposable Gorenstein projective modules in $\mod T$ are as follows.
$$\begin{matrix}
\xymatrix@R=6pt{&k\ar@(ur,ul)[]|{0}\\
0\ar[r]&0\ar[u]&0,\ar[l]}  \ \ \ \ \ \ \ \
 &\xymatrix@R=6pt{&k^{2}\ar@(ur,ul)[]_{\left(\begin{smallmatrix}0&0\\1&0\end{smallmatrix}\right)}\\
0\ar[r]&0\ar[u]&0,\ar[l]}   \ \ \ \ \ \ \ \ &\xymatrix@R=6pt{&k^{3}\ar@(ur,ul)[]_{\left(\begin{smallmatrix}0&0&0\\1&0&0\\0&1&0\end{smallmatrix}\right)}\\
0\ar[r]&0\ar[u]&0,\ar[l]}\\
\xymatrix@R=6pt{&k^{3}\ar@(ur,ul)[]_{\left(\begin{smallmatrix}0&0&0\\1&0&0\\0&1&0\end{smallmatrix}\right)}\\
0\ar[r]&k\ar[u]&0,\ar[l]}  \ \ \ \ \ \ \ \ &\xymatrix@R=6pt{&k^{3}\ar@(ur,ul)[]_{\left(\begin{smallmatrix}0&0&0\\1&0&0\\0&1&0\end{smallmatrix}\right)}\\
0\ar[r]&k\ar[u]&k,\ar[l]}  \ \ \ \ \ \ \ \ &\xymatrix@R=6pt{&k^{3}\ar@(ur,ul)[]_{\left(\begin{smallmatrix}0&0&0\\1&0&0\\0&1&0\end{smallmatrix}\right)}\\
k\ar[r]&k\ar[u]&0.\ar[l]}
\end{matrix}$$
\end{example}

\begin{example}\label{exa3.8}
Let $k$ be a field and $T$ a finite-dimensional $k$-algebra given by the quiver
$$\xymatrix@C=15pt@R=12pt{&&&&&1\ar[ld]_{\alpha_{1}}\\
5\ar[rr]^{\beta}&&4\ar[rr]^{\gamma}&&2\ar[rr]^{\alpha_{2}}&&3\ar[lu]_{\alpha_{3}}}$$
with the relation $\alpha_{2}\alpha_{1}=\alpha_{3}\alpha_{2}=\alpha_{1}\alpha_{3}=0$. Then
$$T=\left(\begin{array}{cc}(e_{1}+e_{2}+e_{3})T(e_{1}+e_{2}+e_{3}) & (e_{1}+e_{2}+e_{3})T(e_{4}+e_{4}) \\
0 & (e_{4}+e_{5})T(e_{4}+e_{5})\end{array}\right),$$
where $e_{i}$ is the idempotent corresponding to the vertex $i$ for any $1\leq i\leq 5$. We have that
$\Gamma: = (e_{4}+e_{5})T(e_{4}+e_{5})$ is a finite-dimensional $k$-algebra given by the quiver
$$\xymatrix@C=15pt@R=12pt{5\ar[r]&4,}$$
and $\Lambda:= (e_{1}+e_{2}+e_{3})T(e_{1}+e_{2}+e_{3})$ is a finite-dimensional $k$-algebra
given by the quiver $$\xymatrix@C=15pt@R=12pt{&1\ar[ld]_{\alpha_{1}}\\
2\ar[rr]^{\alpha_{2}}&&3\ar[lu]_{\alpha_{3}}}$$
with relation $\alpha_{2}\alpha_{1}=\alpha_{3}\alpha_{2}=\alpha_{1}\alpha_{3}=0$.
Take $\mathcal{A}:=\mod \Gamma$, $\mathcal{B}:=\mod \Lambda$ and $\mathbf{F}:=M\otimes_{\Gamma}-$
with $M=(e_{1}+e_{2}+e_{3}) T(e_{4}+e_{5})$. Then $(\mathbf{F},\mathcal{B}) = \mod T$.
We have $_{\Lambda}M \cong {_{\Lambda}P(2)\oplus_{\Lambda}P(2)}$
and $M_{\Gamma} \cong P(5)_{\Gamma}\oplus P(5)_{\Gamma}$, and so $\mathbf{F}$ is perfect.
Notice that $\Lambda$ is self-injective and $\Gamma$ is hereditary, so by Corollary \ref{cor3.6},
all indecomposable Gorenstein projective modules in $\mod T$ are as follows.
$$\begin{matrix}
\xymatrix@C=5pt@R=12pt{&&&&&k\ar[ld]\\
0\ar[rr]&&0\ar[rr]&&0\ar[rr]&&0,\ar[lu]} \ \ \ \ \ \ %
 &\xymatrix@C=5pt@R=12pt{&&&&&0\ar[ld]\\
0\ar[rr]&&0\ar[rr]&&k\ar[rr]&&0,\ar[lu]}  \ \ \ \ \ \ %
 &\xymatrix@C=5pt@R=12pt{&&&&&0\ar[ld]\\
0\ar[rr]&&0\ar[rr]&&0\ar[rr]&&k,\ar[lu]}\\ %
\xymatrix@C=5pt@R=12pt{&&&&&k\ar[ld]\\
0\ar[rr]&&0\ar[rr]&&k\ar[rr]&&0,\ar[lu]} \ \ \ \ \ \ %
 &\xymatrix@C=5pt@R=12pt{&&&&&k\ar[ld]\\
0\ar[rr]&&0\ar[rr]&&0\ar[rr]&&k,\ar[lu]} \ \ \ \ \ \ %
&\xymatrix@C=5pt@R=12pt{&&&&&0\ar[ld]\\
0\ar[rr]&&0\ar[rr]&&k\ar[rr]&&k,\ar[lu]}\\  %
\xymatrix@C=5pt@R=12pt{&&&&&0\ar[ld]\\
0\ar[rr]&&k\ar[rr]&&k\ar[rr]&&k,\ar[lu]} \ \ \ \ \ \ %
&\xymatrix@C=5pt@R=12pt{&&&&&0\ar[ld]\\
k\ar[rr]&&k\ar[rr]&&k\ar[rr]&&k.\ar[lu]}
\end{matrix}$$
\end{example}

By Theorem \ref{th3.5}, we also have the following

\begin{cor}\label{cor3.9}
If $\mathbf{F}$ is perfect,
then $\mathcal{GP}((\mathbf{F},\mathcal{B}))=\mathcal{P}((\mathbf{F},\mathcal{B}))$
if and only if $\mathcal{GP}(\mathcal{A})=\mathcal{P}(\mathcal{A})$
and $\mathcal{GP}(\mathcal{B})=\mathcal{P}(\mathcal{B})$.
\end{cor}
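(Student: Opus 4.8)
\textbf{Proof proposal for Corollary~\ref{cor3.9}.}

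The plan is to extract everything formally from Theorem~\ref{th3.5} and the description of projectives in Lemma~\ref{lem3.1}, using the two component functors $p_{1}:(\mathbf{F},\mathcal{B})\to\mathcal{A}$, ${Y \choose X}_{\phi}\mapsto Y$, and $p_{2}:(\mathbf{F},\mathcal{B})\to\mathcal{B}$, ${Y \choose X}_{\phi}\mapsto X$. Both are functors, hence preserve isomorphisms, and they send the generic projective ${0 \choose Q}\oplus{P \choose \mathbf{F}P}$ of Lemma~\ref{lem3.1} to $P$ and to $Q\oplus\mathbf{F}P$ respectively. This is what lets the component functors ``detect'' projectivity.

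For the ``only if'' direction, assume $\mathcal{GP}((\mathbf{F},\mathcal{B}))=\mathcal{P}((\mathbf{F},\mathcal{B}))$. Given $Y\in\mathcal{GP}(\mathcal{A})$, apply Theorem~\ref{th3.5} with $X=\mathbf{F}Y$, $\phi=\mathrm{id}_{\mathbf{F}Y}$ and $\Coker\phi=0\in\mathcal{GP}(\mathcal{B})$ to get ${Y \choose \mathbf{F}Y}_{\mathrm{id}}\in\mathcal{GP}((\mathbf{F},\mathcal{B}))=\mathcal{P}((\mathbf{F},\mathcal{B}))$; by Lemma~\ref{lem3.1} there is an isomorphism ${Y \choose \mathbf{F}Y}\cong{0 \choose Q}\oplus{P \choose \mathbf{F}P}$ with $Q$ projective in $\mathcal{B}$ and $P$ projective in $\mathcal{A}$, and applying $p_{1}$ gives $Y\cong P\in\mathcal{P}(\mathcal{A})$. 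Symmetrically, given $X\in\mathcal{GP}(\mathcal{B})$, Theorem~\ref{th3.5} with $Y=0$ (so $\phi:\mathbf{F}0=0\to X$ and $\Coker\phi=X$) yields ${0 \choose X}\in\mathcal{P}((\mathbf{F},\mathcal{B}))$, hence ${0 \choose X}\cong{0 \choose Q}\oplus{P \choose \mathbf{F}P}$; applying $p_{1}$ forces $P\cong 0$, so ${0 \choose X}\cong{0 \choose Q}$, and applying $p_{2}$ gives $X\cong Q\in\mathcal{P}(\mathcal{B})$. Thus $\mathcal{GP}(\mathcal{A})=\mathcal{P}(\mathcal{A})$ and $\mathcal{GP}(\mathcal{B})=\mathcal{P}(\mathcal{B})$.

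For the ``if'' direction, assume $\mathcal{GP}(\mathcal{A})=\mathcal{P}(\mathcal{A})$ and $\mathcal{GP}(\mathcal{B})=\mathcal{P}(\mathcal{B})$, and let $M={Y \choose X}_{\phi}\in\mathcal{GP}((\mathbf{F},\mathcal{B}))$. By Theorem~\ref{th3.5}, $\phi$ is a monomorphism, $Y\in\mathcal{GP}(\mathcal{A})=\mathcal{P}(\mathcal{A})$, and $\Coker\phi\in\mathcal{GP}(\mathcal{B})=\mathcal{P}(\mathcal{B})$. Since $\Coker\phi$ is projective, the exact sequence $0\to\mathbf{F}Y\stackrel{\phi}{\longrightarrow}X\to\Coker\phi\to 0$ splits, and then, exactly as in the proof of Corollary~\ref{cor3.6}, a splitting retraction $X\to\mathbf{F}Y$ lifts to an isomorphism ${Y \choose X}_{\phi}\cong{Y \choose \mathbf{F}Y}_{\mathrm{id}}\oplus{0 \choose \Coker\phi}$ in $(\mathbf{F},\mathcal{B})$. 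As $Y\in\mathcal{P}(\mathcal{A})$ and $\Coker\phi\in\mathcal{P}(\mathcal{B})$, both summands are projective objects of $(\mathbf{F},\mathcal{B})$ by Lemma~\ref{lem3.1}, so $M\in\mathcal{P}((\mathbf{F},\mathcal{B}))$. Hence $\mathcal{GP}((\mathbf{F},\mathcal{B}))=\mathcal{P}((\mathbf{F},\mathcal{B}))$.

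The whole argument is formal once Theorem~\ref{th3.5} is available, so no genuine obstacle is anticipated; the only points requiring a little care are the bookkeeping that the split short exact sequence in $\mathcal{B}$ lifts to a direct-sum decomposition in the comma category (already done in Corollary~\ref{cor3.6}) and the observation that $p_{1}$ kills the ${0 \choose Q}$-part so that vanishing of the top component of a projective forces that summand to be zero.
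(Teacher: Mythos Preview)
Your proof is correct and follows essentially the same route as the paper's own argument: both directions rest on Theorem~\ref{th3.5} and Lemma~\ref{lem3.1}, using ${Y\choose\mathbf{F}Y}$ and ${0\choose X}$ to pass from the comma category down to $\mathcal{A}$ and $\mathcal{B}$, and the splitting of $0\to\mathbf{F}Y\to X\to\Coker\phi\to 0$ for the converse. The only difference is cosmetic --- you spell out the component functors $p_{1},p_{2}$ where the paper just invokes Lemma~\ref{lem3.1} implicitly.
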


\begin{proof}
We first prove the necessity.
Let $Y$ be Gorenstein projective in $\mathcal{A}$. Then by Theorem \ref{th3.5},
${Y \choose \mathbf{F}Y}$ is Gorenstein projective in $(\mathbf{F},\mathcal{B})$. So
${Y \choose \mathbf{F}Y}$ is projective in $(\mathbf{F},\mathcal{B})$ by assumption, and hence
$Y$ is projective in $\mathcal{A}$. Now let $X$ be Gorenstein projective in $\mathcal{B}$.
Then by Theorem \ref{th3.5}, ${0 \choose X}$ is Gorenstein projective in $(\mathbf{F},\mathcal{B})$.
So ${0 \choose X}$ is projective in $(\mathbf{F},\mathcal{B})$ by assumption, and hence $X$ is projective
in $\mathcal{B}$.

We next prove the sufficiency. Let ${Y \choose X}_{\phi}$ be Gorenstein projective in
$(\mathbf{F},\mathcal{B})$. Then we have the following exact sequence
$$\xymatrix@C=15pt{0\ar[r]&\mathbf{F}Y}\xymatrix@C=15pt{\ar[r]&X\ar[r]&\Coker \phi \ar[r]&0}$$
in $\mathcal{B}$ with $\Coker \phi \in\mathcal{GP}(\mathcal{B})$ and $Y\in\mathcal{GP(A)}$
by Theorem \ref{th3.5}. So $\Coker \phi $ is projective in $\mathcal{B}$ and $Y$ is projective
in $\mathcal{A}$ by assumption, and hence
$X=\mathbf{F}Y\oplus\Coker \phi$ and ${Y \choose X}_{\phi}={Y \choose \mathbf{F}Y}\oplus{0\choose \Coker \phi}$.
Thus ${Y \choose X}_{\phi}$ is projective in $(\mathbf{F},\mathcal{B})$ by Lemma \ref{lem3.1}.
\end{proof}

Recall from \cite{ECT} that a ring $R$ is called {\it strongly left CM-free} if each Gorenstein projective
module in $\Mod R$ is projective.
Let $\Lambda$ and $\Gamma$ be arbitrary rings and $M$ a $(\Lambda,\Gamma)$-bimodule,
and let $T:=\left(\begin{array}{cc}
\Lambda & M \\
0 & \Gamma\end{array}\right)$ be the upper triangular matrix ring.
Then $\Mod T$ is the left comma category $(M\otimes_{\Gamma}-,\Mod \Lambda)$.
If $M_{\Gamma}$ has finite flat dimension and $_{\Lambda}M$ has finite projective dimension,
then the functor $M\otimes_{\Gamma}-$ is perfect. So, as an immediate consequence of Corollary \ref{cor3.9},
we have the following

\begin{cor}\label{cor3.10}
Let $\Lambda$ and $\Gamma$ be arbitrary rings and $M$ a $(\Lambda,\Gamma)$-bimodule,
and let $T$ be the upper triangular matrix ring as above.
If $M_{\Gamma}$ has finite flat dimension and $_{\Lambda}M$ has finite projective dimension,
then $T$ is strongly left CM-free if and only if so are $\Lambda$ and $\Gamma$.
\end{cor}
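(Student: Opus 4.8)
The plan is to deduce Corollary \ref{cor3.10} directly from Corollary \ref{cor3.9} by identifying the ring-theoretic data with the categorical data. First I would set $\mathcal{A} := \Mod\Gamma$, $\mathcal{B} := \Mod\Lambda$, and $\mathbf{F} := M\otimes_{\Gamma}-\colon \Mod\Gamma \to \Mod\Lambda$. This functor is additive and right exact, and the left comma category $(\mathbf{F},\mathcal{B})$ is (isomorphic to) $\Mod T$, as recalled in the text just before the statement; under this identification the projective objects of $(\mathbf{F},\mathcal{B})$ correspond to the projective $T$-modules, and the Gorenstein projective objects correspond to the Gorenstein projective $T$-modules. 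So the equality $\mathcal{GP}((\mathbf{F},\mathcal{B})) = \mathcal{P}((\mathbf{F},\mathcal{B}))$ is exactly the assertion that $T$ is strongly left CM-free, and likewise $\mathcal{GP}(\mathcal{A}) = \mathcal{P}(\mathcal{A})$, $\mathcal{GP}(\mathcal{B}) = \mathcal{P}(\mathcal{B})$ mean that $\Gamma$ and $\Lambda$ respectively are strongly left CM-free.

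The one hypothesis of Corollary \ref{cor3.9} that must still be checked is that $\mathbf{F} = M\otimes_{\Gamma}-$ is perfect in the sense of Definition \ref{def3.3}, i.e. satisfies (P1) and (P2), under the assumption that $M_{\Gamma}$ has finite flat dimension and $_{\Lambda}M$ has finite projective dimension. For (P1): if $\mathcal{Q}^{\bullet}$ is an exact complex of projective $\Gamma$-modules, then since $M_{\Gamma}$ has finite flat dimension, say $\fd M_{\Gamma} = d < \infty$, we have $\Tor_{i}^{\Gamma}(M,-) = 0$ for $i > d$; an exact complex of projectives has all syzygies with vanishing higher $\Tor$ against $M$ (each syzygy fits in short exact sequences of projectives, so dimension shifting kills the relevant $\Tor$), whence $M\otimes_{\Gamma}\mathcal{Q}^{\bullet}$ is exact. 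For (P2): by Lemma \ref{lem3.4} it suffices to show $\Ext_{\Lambda}^{1}(G,\mathbf{F}Q) = 0$ for every Gorenstein projective $\Lambda$-module $G$ and every projective $\Gamma$-module $Q$; but $\mathbf{F}Q = M\otimes_{\Gamma}Q$ is a direct summand of a direct sum of copies of $_{\Lambda}M$ (since $Q$ is projective over $\Gamma$), hence has finite projective dimension over $\Lambda$, and $\Ext_{\Lambda}^{\geq 1}(G,-)$ vanishes on modules of finite projective dimension when $G$ is Gorenstein projective. This verifies that $\mathbf{F}$ is perfect.

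With $\mathbf{F}$ perfect, Corollary \ref{cor3.9} applies verbatim and gives $\mathcal{GP}((\mathbf{F},\mathcal{B})) = \mathcal{P}((\mathbf{F},\mathcal{B}))$ if and only if $\mathcal{GP}(\mathcal{A}) = \mathcal{P}(\mathcal{A})$ and $\mathcal{GP}(\mathcal{B}) = \mathcal{P}(\mathcal{B})$; translating back through the identification $(\mathbf{F},\mathcal{B}) = \Mod T$ yields exactly the claim that $T$ is strongly left CM-free if and only if both $\Lambda$ and $\Gamma$ are. The main obstacle is really just the verification that the finiteness conditions on $M$ force perfectness — in particular checking (P2), where one must recognize $\mathbf{F}Q$ as having finite projective dimension over $\Lambda$ and invoke the standard Ext-vanishing for Gorenstein projectives against finite-projective-dimension modules — everything else is a translation of language. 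One should also take care to state the result for \emph{left} modules throughout (since strongly CM-free is a left-sided notion and the comma category $(M\otimes_{\Gamma}-, \Mod\Lambda)$ is built from left modules), which is why only $\fd M_{\Gamma}$ and $\pd {_{\Lambda}M}$ enter.
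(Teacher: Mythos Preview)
Your proposal is correct and follows exactly the approach the paper takes: the paper states (in the paragraph immediately preceding Corollary~\ref{cor3.10}) that the hypotheses on $M$ make $M\otimes_{\Gamma}-$ perfect and then invokes Corollary~\ref{cor3.9} directly, without spelling out the verification of (P1) and (P2). Your write-up simply fills in those details, and the arguments you give for (P1) via dimension-shifting on $\Tor$ and for (P2) via Lemma~\ref{lem3.4} and finite projective dimension of $\mathbf{F}Q$ are the intended ones.
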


The above corollary generalized \cite[Theorem 4.1]{ECT}, where the assumption that
$\Lambda$ is left Gorenstein regular is needed.


\section{Recollements}

\begin{defn}(\cite{FP,Ps})\label{def4.1}
 A recollement, denoted by ($\mathcal{A},\mathcal{B},\mathcal{C}$), of abelian categories is a diagram
$$\xymatrix{\mathcal{A}\ar[rr]!R|{i_{*}}&&\ar@<-2ex>[ll]!R|{i^{*}}\ar@<2ex>[ll]!R|{i^{!}}\mathcal{B}
\ar[rr]!L|{j^{*}}&&\ar@<-2ex>[ll]!L|{j_{!}}\ar@<2ex>[ll]!L|{j_{*}}\mathcal{C}}$$
of abelian categories and additive functors such that
\begin{enumerate}
\item[(1)] ($i^{*},i_{*}$), ($i_{*},i^{!}$), ($j_{!},j^{*}$) and ($j^{*},j_{*}$) are adjoint pairs.
\item[(2)] $i_{*}$, $j_{!}$ and $j_{*}$ are fully faithful.
\item[(3)] $\Im i_{*}=\Ker j^{*}$.
\end{enumerate}
\end{defn}

The following lemma is fundamental in this section.

\begin{lem}{\rm (\cite[Example 2.12]{Ps})}\label{lem4.2}
There exists the following recollement of abelian categories:
$$\xymatrix{\mathcal{B}\ar[rr]!R|(.4){i_{*}}&&\ar@<-2ex>[ll]!R|(.6){i^{*}}\ar@<2ex>[ll]!R|(.6){i^{!}}(\mathbf{F},\mathcal{B})
\ar[rr]!L|(.6){j^{*}}&&\ar@<-2ex>[ll]!L|(.4){j_{!}}\ar@<2ex>[ll]!L|(.4){j_{*}}\mathcal{A},}$$
where $$i^{*}:{{Y} \choose{X}}_{\phi} \mapsto \Coker\phi,\ i_{*}:{X} \mapsto {{0} \choose{X}},\
i^{!}:{{Y} \choose{X}} \mapsto {X},$$ $$j_{!}:{Y} \mapsto {{Y} \choose{\mathbf{F}{Y}}},\
j^{*}:{{Y} \choose{X}} \mapsto {Y},\ j_{*}:{Y} \mapsto {{Y} \choose{0}}.$$
\end{lem}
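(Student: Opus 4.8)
The plan is to verify the three defining conditions of Definition \ref{def4.1} directly for the six displayed functors; once one unwinds that a morphism ${Y\choose X}_\phi\to{Y'\choose X'}_{\phi'}$ in $(\mathbf{F},\mathcal{B})$ is precisely a pair $(\alpha,\beta)$ with $\beta\phi=\phi'\circ\mathbf{F}\alpha$, each step becomes an elementary computation. First I would note that $(\mathbf{F},\mathcal{B})$ is abelian by Remark \ref{rem2.4}(2), and that all six functors are additive and well defined: $i^*$ sends $(\alpha,\beta)$ to the morphism $\Coker\phi\to\Coker\phi'$ induced by $\beta$ (which exists exactly because $\beta\phi=\phi'\circ\mathbf{F}\alpha$), $j_!$ sends $\alpha\colon Y\to Y'$ to $(\alpha,\mathbf{F}\alpha)$ (the square commuting since $\mathbf{F}\alpha\circ 1=1\circ\mathbf{F}\alpha$), and $i_*$, $i^!$, $j^*$, $j_*$ are the evident (co)restrictions. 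Right exactness of $\mathbf{F}$ is not used here beyond guaranteeing that $(\mathbf{F},\mathcal{B})$ is abelian, since the cokernel defining $i^*$ exists in any abelian category.

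Next I would establish the four adjunctions by exhibiting the natural bijections on Hom-sets, in each case by computing the comma-category Hom-set. For $(i^*,i_*)$: a morphism ${Y\choose X}_\phi\to i_*Z={0\choose Z}_0$ is a pair $(0,\beta)$ with $\beta\colon X\to Z$ subject to $\beta\phi=0$, so by the universal property of the cokernel such $\beta$ correspond bijectively and naturally to morphisms $\Coker\phi\to Z$, giving $\Hom_{(\mathbf{F},\mathcal{B})}({Y\choose X}_\phi,i_*Z)\cong\Hom_{\mathcal{B}}(i^*{Y\choose X}_\phi,Z)$. For $(i_*,i^!)$: a morphism $i_*X={0\choose X}_0\to{Y'\choose X'}_{\phi'}$ is a pair $(0,\beta)$ with $\beta\colon X\to X'$ arbitrary (the commutativity condition is vacuous), so these biject naturally with $\Hom_{\mathcal{B}}(X,i^!{Y'\choose X'})$. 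For $(j_!,j^*)$: a morphism $j_!Y={Y\choose\mathbf{F}Y}_{1}\to{Y'\choose X'}_{\phi'}$ is a pair $(\alpha,\beta)$ with $\beta=\phi'\circ\mathbf{F}\alpha$, hence is completely determined by $\alpha\in\Hom_{\mathcal{A}}(Y,j^*{Y'\choose X'})$, and every such $\alpha$ occurs. For $(j^*,j_*)$: a morphism ${Y\choose X}_\phi\to j_*Y'={Y'\choose 0}_0$ is a pair $(\alpha,0)$ with $\alpha\colon Y\to Y'$ arbitrary, giving $\Hom_{(\mathbf{F},\mathcal{B})}({Y\choose X}_\phi,j_*Y')\cong\Hom_{\mathcal{A}}(j^*{Y\choose X}_\phi,Y')$. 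In all four cases naturality in both variables is immediate from the descriptions.

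Then I would dispatch the remaining two conditions. For full faithfulness of $i_*$, $j_!$, $j_*$, the quickest route is to observe that $i^!i_*$, $j^*j_!$, $j^*j_*$ are all canonically the identity functor (e.g. $j^*j_!Y=j^*{Y\choose\mathbf{F}Y}=Y$), and that the relevant unit of the adjunction (for the left adjoints $i_*$ and $j_!$) or counit (for the right adjoint $j_*$) is then the identity natural transformation; invoking the standard fact that a left adjoint with invertible unit, and a right adjoint with invertible counit, is fully faithful, the claim follows. (Alternatively one computes directly, e.g. $\Hom_{(\mathbf{F},\mathcal{B})}({Y\choose\mathbf{F}Y}_{1},{Y'\choose\mathbf{F}Y'}_{1})$ consists of pairs $(\alpha,\mathbf{F}\alpha)$, hence equals $\Hom_{\mathcal{A}}(Y,Y')$.) Finally, for $\Im i_*=\Ker j^*$: since $j^*{Y\choose X}_\phi=Y$, an object lies in $\Ker j^*$ iff $Y=0$; but then its structure map $\phi\colon\mathbf{F}0=0\to X$ is forced to be zero, so the object is exactly $i_*X={0\choose X}_0$, and conversely every object $i_*X$ has zero $\mathcal{A}$-component.

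I do not expect a genuine obstacle; the proof is a chain of routine unwindings. The one place meriting a little care is the adjunction $(i^*,i_*)$: one must confirm that the commutativity square defining a morphism into ${0\choose Z}$ is exactly the relation $\beta\phi=0$, and that the resulting identification with $\Hom_{\mathcal{B}}(\Coker\phi,Z)$ is natural in ${Y\choose X}_\phi$, which comes down to functoriality of $\Coker$. If one adopts the stronger convention in which $i_*$ and $j^*$ are additionally required to be exact, this too is routine, as $i_*$ and $j^*$ are visibly exact.
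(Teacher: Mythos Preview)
Your proposal is correct: each of the four adjunctions, the three full-faithfulness claims, and the identification $\Im i_*=\Ker j^*$ are verified by the elementary Hom-set computations you describe, and nothing beyond the abelianness of $(\mathbf{F},\mathcal{B})$ (via Remark~\ref{rem2.4}(2)) is needed.

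The paper, however, does not prove this lemma at all; it simply quotes it from \cite[Example~2.12]{Ps} and uses it as a black box. So your approach differs in that you supply a self-contained verification where the paper relies on an external reference. What your route buys is independence from \cite{Ps} and transparency about exactly which parts of the comma-category structure are being used (in particular, that only the universal property of the cokernel is needed for $(i^*,i_*)$, and that right exactness of $\mathbf{F}$ plays no role beyond making $(\mathbf{F},\mathcal{B})$ abelian). What the paper's approach buys is brevity, since the result is a standard instance of a general recollement construction for comma categories.
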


\begin{defn}(\cite{BBD})\label{def4.3}
Let $\mathcal{C'}$, $\mathcal{C}$ and $\mathcal{C''}$ be triangulated categories. The diagram of exact functors
\begin{align}\label{re1}
\xymatrix{\mathcal{C'}\ar[rr]!R|{i_{*}}&&\ar@<-2ex>[ll]!R|{i^{*}}\ar@<2ex>[ll]!R|{i^{!}}\mathcal{C}
\ar[rr]!L|{j^{*}}&&\ar@<-2ex>[ll]!L|{j_{!}}\ar@<2ex>[ll]!L|{j_{*}}\mathcal{C''}}\end{align}
is a {\it recollement} of $\mathcal{C}$ relative to $\mathcal{C'}$ and $\mathcal{C''}$, if the following four conditions are satisfied.
\begin{enumerate}
\item[(R1)] ($i^{*},i_{*}$), ($i_{*},i^{!}$), ($j_{!},j^{*}$) and ($j^{*},j_{*}$) are adjoint pairs.
\item[(R2)] $i_{*}$, $j_{!}$ and $j_{*}$ are fully faithful.
\item[(R3)] $ j^{*}i_{*}= 0 $.
\item[(R4)] For each object $X \in \mathcal{C}$, the counits and units give rise to the following distinguished triangles
 $$\xymatrix@C=15pt{j_{!}j^{*}(X)\ar[r]^(.7){\epsilon_{X}}&
X\ar[r]^(.4){\eta_{X}}&i_{*}i^{*}(X)\ar[r]&j_{!}j^{*}(X)[1],}$$
$$\xymatrix@C=15pt{i_{*}i^{!}(X)\ar[r]^(.7){\omega_{X}}&X\ar[r]^(.4){\zeta_{X}}&
j_{*}j^{*}(X)\ar[r]&i_{*}i^{!}(X)[1],}$$
where $[1]$ is the shift functor.
\end{enumerate}
A {\it left recollement} of $\mathcal{C}$ relative to $\mathcal{C'}$ and $\mathcal{C''}$
is a diagram of exact functors consisting of the upper two rows in the diagram (4.1) satisfying all the conditions which
involve only the functors $i^*,i_*,j_{!},j^{*}$.
\end{defn}

The following result is useful in the sequel.

\begin{lem}{\rm (\cite[Section 1]{IKM})}\label{lem4.5}
 Let (\ref{re1}) be a diagram of triangulated categories. Then the following statements are equivalent.
\begin{itemize}
\item[(1)] The diagram (\ref{re1}) is a recollement.
\item[(2)] The conditions (R1), (R2) and $\Im i_{*} = \Ker j^{*}$ are satisfied.
\item[(3)] The conditions (R1), (R2) and $\Im j_{!} = \Ker i^{*}$ are satisfied.
\item[(4)] The conditions (R1), (R2) and $\Im j_{*} = \Ker i^{!}$ are satisfied.
\end{itemize}
\end{lem}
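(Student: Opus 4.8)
The plan is to organize the proof around the equivalence $(1)\Leftrightarrow(2)$, and then to reduce $(1)\Leftrightarrow(3)$ and $(1)\Leftrightarrow(4)$ to it. Since conditions (R1) and (R2) appear in all four statements, the only content lies in the interaction conditions (R3), (R4) and the three ``image $=$ kernel'' equalities. For $(1)\Rightarrow(2),(3),(4)$ almost everything is immediate: $\Im i_{*}\subseteq\Ker j^{*}$ is (R3), while $\Im j_{!}\subseteq\Ker i^{*}$ and $\Im j_{*}\subseteq\Ker i^{!}$ come from applying $i^{*}$, resp.\ $i^{!}$, to the distinguished triangles of (R4) at $X=j_{!}Y$, resp.\ $X=j_{*}Y$, using that (R2) forces the (co)units $\mathrm{id}\to j^{*}j_{!}$ and $j^{*}j_{*}\to\mathrm{id}$ to be isomorphisms and that $i_{*}$ is faithful. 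The reverse inclusions are obtained by feeding an object annihilated by $j^{*}$ (resp.\ $i^{*}$, resp.\ $i^{!}$) into the relevant triangle of (R4), which then collapses to $X\cong i_{*}i^{*}X$ (resp.\ $X\cong j_{!}j^{*}X$, resp.\ $X\cong j_{*}j^{*}X$).

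The substantial direction is $(2)\Rightarrow(1)$, where (R3) is free from $\Im i_{*}\subseteq\Ker j^{*}$ and one must reconstruct the two triangles of (R4). The first is built by completing the counit $\epsilon_{X}\colon j_{!}j^{*}X\to X$ to a triangle $j_{!}j^{*}X\xrightarrow{\epsilon_{X}}X\to Z\to j_{!}j^{*}X[1]$: applying the exact functor $j^{*}$ and using the triangle identity shows that $j^{*}(\epsilon_{X})$ is a retraction of the isomorphism $j^{*}X\to j^{*}j_{!}j^{*}X$, hence an isomorphism, so $j^{*}Z=0$ and $Z\cong i_{*}Z'$ by $\Ker j^{*}=\Im i_{*}$; then applying $\Hom_{\mathcal{C}}(-,i_{*}W)$ to the triangle, together with $\Hom_{\mathcal{C}}(j_{!}j^{*}X,i_{*}W)\cong\Hom(j^{*}X,j^{*}i_{*}W)=0$ (and similarly after a shift) and full faithfulness of $i_{*}$, yields a natural isomorphism $\Hom(Z',W)\cong\Hom(i^{*}X,W)$, so $Z'\cong i^{*}X$ by Yoneda, and a diagram chase identifies $X\to Z$ with the unit $\eta_{X}$. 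The second triangle is produced dually by completing the unit $\zeta_{X}\colon X\to j_{*}j^{*}X$; here $j^{*}(\zeta_{X})$ is a section of the isomorphism $j^{*}j_{*}j^{*}X\to j^{*}X$, the cone lies in $\Ker j^{*}=\Im i_{*}$, and $\Hom(i_{*}V,-)$ together with $\Hom(i_{*}V,j_{*}j^{*}X)\cong\Hom(j^{*}i_{*}V,j^{*}X)=0$ and the $(i_{*},i^{!})$ adjunction identifies it with $i_{*}i^{!}X$.

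It then remains to handle $(3)\Rightarrow(1)$ and $(4)\Rightarrow(1)$; by the equivalence $(1)\Leftrightarrow(2)$ it is enough to deduce condition (2). The key trick is that $\Im j_{!}=\Ker i^{*}$ gives $i^{*}j_{!}=0$, whence by the $(i^{*},i_{*})$ adjunction $\Hom_{\mathcal{C}}(j_{!}j^{*}i_{*}A,i_{*}A)\cong\Hom_{\mathcal{C'}}(i^{*}j_{!}j^{*}i_{*}A,A)=0$, so the counit $\epsilon_{i_{*}A}$ vanishes; the triangle identity for $(j_{!},j^{*})$ then forces $j^{*}i_{*}A=0$, i.e.\ (R3). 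To upgrade $\Im i_{*}\subseteq\Ker j^{*}$ to an equality, take $X$ with $j^{*}X=0$, complete $\nu_{X}\colon X\to i_{*}i^{*}X$ to a triangle, apply $i^{*}$ to see its cone lies in $\Ker i^{*}=\Im j_{!}$, say $\cong j_{!}W'$, observe $\Hom(j_{!}W',X)\cong\Hom(W',j^{*}X)=0$ so that cone maps to $X$ by zero, and conclude $W'\cong j^{*}j_{!}W'=0$ by full faithfulness of $j_{!}$, hence $X\cong i_{*}i^{*}X$. Finally $(4)\Rightarrow(1)$ follows by running the same scheme in the opposite categories, where relabelling the functors carries a recollement to a recollement and exchanges conditions (3) and (4).

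I expect the main obstacle to be the bookkeeping in $(2)\Rightarrow(1)$: not the existence of the two cones, but checking that they genuinely represent $i^{*}X$ and $i^{!}X$ naturally in $X$ and that their structure maps coincide with the canonical (co)units, so that (R4) holds for the given functors and not merely up to unspecified isomorphism. A secondary subtlety, in the implications from (3) and (4), is passing from inclusions of subcategories to equalities without running afoul of Eilenberg-swindle phenomena; this is exactly where full faithfulness of $j_{!}$ and $i_{*}$ is used, via $j_{!}W'\cong 0\Rightarrow W'\cong 0$.
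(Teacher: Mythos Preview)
The paper does not prove this lemma at all: it is quoted verbatim from \cite[Section~1]{IKM} and used as a black box. So there is nothing in the paper to compare your argument against.

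On its own merits your outline is sound. The implication $(2)\Rightarrow(1)$ is handled correctly, including the point you flag about identifying the cone with $i_{*}i^{*}X$ (resp.\ $i_{*}i^{!}X$) via Yoneda and then matching the structural map with the actual unit. One small compression deserves a word: in your argument for $(3)\Rightarrow(2)$ you write ``conclude $W'\cong j^{*}j_{!}W'=0$ by full faithfulness of $j_{!}$'', but you have not yet said why $j^{*}j_{!}W'=0$. The cleanest route is to bypass the ``cone maps to $X$ by zero'' detour entirely: simply apply the exact functor $j^{*}$ to the triangle $W\to X\to i_{*}i^{*}X\to W[1]$; since $j^{*}X=0$ by hypothesis and $j^{*}i_{*}=0$ by the (R3) you just established, exactness forces $j^{*}W=0$, whence $W'\cong j^{*}j_{!}W'\cong j^{*}W=0$. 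Your split-triangle argument also works (the vanishing of $W\to X$ makes $W[1]$ a summand of $i_{*}i^{*}X$, so $j^{*}W[1]$ is a summand of $0$), but it is longer and the key step is left implicit in your write-up. With that clarification the proof is complete.
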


\begin{rem}\label{rem4.5}
Each assertion in Lemma \ref{lem4.5}(2)--(4) involving only the functors $i^*,i_*,j_{!},j^{*}$ is equivalent to that
the upper two rows in the diagram (\ref{re1}) is a left recollement.
\end{rem}

The following result is a generalization of \cite[Theorem 3.3]{Z}.

\begin{thm}\label{th4.4}
If $\mathbf{F}$ is perfect, then there exists a left recollement
$$\xymatrix{\underline{\mathcal{GP}(\mathcal{B})}\ar@<-1ex>[rr]!R|(.4){i_{*}}
&&\ar@<-1ex>[ll]!R|(.6){i^{*}}\ar@<-1ex>[rr]!R|(.4){j^{*}}\underline{\mathcal{GP}((\mathbf{F},\mathcal{B}))}
&&\ar@<-1ex>[ll]!R|(.6){j_{!}}\underline{\mathcal{GP}(\mathcal{A})}.}$$
\end{thm}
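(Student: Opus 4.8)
The plan is to transport the abelian-category recollement of Lemma \ref{lem4.2} to the Gorenstein-projective stable categories, using Theorem \ref{th3.5} to understand how the six functors behave. First I would restrict the functors $i^*,i_*,j_!,j^*$ of Lemma \ref{lem4.2} to the Gorenstein projective subcategories. By Theorem \ref{th3.5}, if ${Y \choose X}_\phi \in \mathcal{GP}((\mathbf{F},\mathcal{B}))$ then $\phi$ is monic, $\Coker\phi = i^*{Y\choose X}_\phi \in \mathcal{GP}(\mathcal{B})$ and $Y = j^*{Y\choose X}_\phi \in \mathcal{GP}(\mathcal{A})$, so $i^*$ and $j^*$ really do land in $\mathcal{GP}(\mathcal{B})$ and $\mathcal{GP}(\mathcal{A})$ respectively; conversely $i_*X = {0\choose X}$ and $j_!Y = {Y\choose \mathbf{F}Y}$ are Gorenstein projective in $(\mathbf{F},\mathcal{B})$ whenever $X,Y$ are (again by Theorem \ref{th3.5}, since for $i_*$ the cokernel map is $0\hookrightarrow X$ with cokernel $X$, and for $j_!$ it is $\mathrm{id}_{\mathbf{F}Y}$ with cokernel $0\in\mathcal{P}(\mathcal{B})$). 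Each of these four functors is additive and sends projectives to projectives (using Lemma \ref{lem3.1}: $i_*$ sends $\mathcal{P}(\mathcal{B})$ into $\mathcal{P}((\mathbf{F},\mathcal{B}))$, $j_!$ sends $\mathcal{P}(\mathcal{A})$ into $\mathcal{P}((\mathbf{F},\mathcal{B}))$, and $i^*,j^*$ applied to the projectives of Lemma \ref{lem3.1} give projectives), hence they induce additive functors on the stable categories which I will denote by the same symbols.

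Next I would check that the induced functors on stable categories are exact (triangulated). The standard fact (used implicitly after Definition \ref{def2.5}) is that for a Frobenius category the stable category is triangulated with shift the cosyzygy, and an additive functor between Frobenius categories that preserves the projective-injective objects and preserves short exact sequences with such induced conflations becomes exact. Concretely, $i_*, j_!$ are exact on the nose (they have both adjoints and are kernel/cokernel preserving in the relevant degrees), and $i^*, j^*$ are right exact and left exact respectively on the comma category; the point is that on conflations between Gorenstein projective objects these functors preserve exactness — for $j^* = $ first coordinate this is immediate since a conflation ${Y_1\choose X_1}\to{Y_2\choose X_2}\to{Y_3\choose X_3}$ of Gorenstein projectives yields a short exact sequence $0\to Y_1\to Y_2\to Y_3\to 0$, and for $i^* = \Coker\phi$ one uses the snake lemma together with the injectivity of the structure maps $\phi$ guaranteed by Theorem \ref{th3.5}. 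This gives exact functors $i_*\colon \underline{\mathcal{GP}(\mathcal{B})}\to\underline{\mathcal{GP}((\mathbf{F},\mathcal{B}))}$, $i^*$ the other way, and similarly $j_!, j^*$.

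Then I would verify the left-recollement axioms, i.e. the parts of (R1)–(R4) involving only $i^*, i_*, j_!, j^*$, equivalently (by Remark \ref{rem4.5} and Lemma \ref{lem4.5}) the conditions (R1), (R2) together with $\Im i_* = \Ker j^*$. The adjunctions $(i^*, i_*)$ and $(j_!, j^*)$ on the stable level are inherited from the abelian adjunctions of Lemma \ref{lem4.2}: one checks the unit and counit natural transformations survive passing to the stable quotient, using that all the functors preserve projectives. Full faithfulness of $i_*$ and $j_!$ on the stable categories: for $i_*$, $\Hom_{(\mathbf{F},\mathcal{B})}({0\choose X},{0\choose X'}) = \Hom_{\mathcal{B}}(X,X')$ already at the abelian level, and one must check that a morphism ${0\choose X}\to{0\choose X'}$ factors through a projective of $(\mathbf{F},\mathcal{B})$ iff $X\to X'$ factors through a projective of $\mathcal{B}$ — the ``if'' direction is clear, and the ``only if'' uses Lemma \ref{lem3.1}: a factorization through ${0\choose Q}\oplus{P\choose \mathbf{F}P}$ composed with the structure maps forces a factorization through $Q$, since $\mathbf{F}$ applied to the zero first-coordinate kills the ${P\choose\mathbf{F}P}$ contribution. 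Similarly for $j_!$. The relation $j^* i_* = 0$ is immediate since $j^*{0\choose X} = 0$. Finally $\Ker j^* = \Im i_*$ in the stable category: if ${Y\choose X}_\phi$ is Gorenstein projective with $j^*{Y\choose X}_\phi = Y$ zero in $\underline{\mathcal{GP}(\mathcal{A})}$, i.e. $Y$ projective, then by Theorem \ref{th3.5} the conflation $0\to\mathbf{F}Y\to X\to\Coker\phi\to 0$ splits (as in the proof of Corollary \ref{cor3.9}, using Lemma \ref{lem3.4}), so ${Y\choose X}_\phi \cong {Y\choose\mathbf{F}Y}\oplus{0\choose\Coker\phi}$, and the first summand is projective in $(\mathbf{F},\mathcal{B})$ by Lemma \ref{lem3.1}, whence ${Y\choose X}_\phi \cong {0\choose\Coker\phi} = i_*(\Coker\phi)$ in the stable category; the reverse inclusion is $j^*i_* = 0$.

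The main obstacle I anticipate is the careful bookkeeping in showing the four functors descend to \emph{exact} functors on the stable triangulated categories — specifically checking that the distinguished triangles (given by conflations in the Frobenius categories $\mathcal{GP}$) are preserved. This requires knowing that each functor carries a conflation of Gorenstein projective objects to a conflation, and that it respects the cosyzygy construction up to the canonical isomorphism. For $i_*$ and $j_!$ this is essentially formal; the delicate case is $i^* = \Coker\phi$, where one must run the snake lemma argument from the proof of Theorem \ref{th3.5} in families and confirm the resulting map $i^*$ of cosyzygies agrees with the one coming from the triangulated structure. The adjunction and full-faithfulness verifications, while numerous, are routine consequences of Lemma \ref{lem4.2}, Lemma \ref{lem3.1} and the fact that all functors preserve projectives.
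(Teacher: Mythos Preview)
Your proposal is correct and follows essentially the same route as the paper: restrict the four functors $i^*,i_*,j_!,j^*$ from Lemma~\ref{lem4.2} to Gorenstein projectives via Theorem~\ref{th3.5}, check they descend to the stable categories, verify the adjunctions and full faithfulness, and finish with the $\Ker j^* = \Im i_*$ computation using the splitting argument from Lemma~\ref{lem3.4}. The paper does exactly this.

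The one point of difference worth noting is how exactness of the induced triangulated functors is established. The paper is more economical: it observes that $i_*$ and $j^*$ are exact already at the abelian level and send projectives to projectives, hence induce exact functors on the stable categories by the standard Frobenius--Happel argument; then exactness of $i^*$ and $j_!$ follows for free, since an adjoint of an exact functor between triangulated categories is automatically exact (Keller, \cite[Lemma 8.3]{K}). This bypasses the snake-lemma verification you anticipate for $i^*$, and it also repairs a small slip in your sketch: contrary to what you write, $j_!$ does \emph{not} have both adjoints at the abelian level (it is only a left adjoint to $j^*$), so it is merely right exact there. Showing directly that $j_!$ sends conflations in $\mathcal{GP}(\mathcal{A})$ to conflations in $\mathcal{GP}((\mathbf{F},\mathcal{B}))$ would require proving that $\mathbf{F}$ is exact on short exact sequences of Gorenstein projectives---which is true, via (P1) and a horseshoe argument, but is work you have not supplied. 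The adjunction trick avoids all of this.
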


\begin{proof}
We first construct the functors involved. By Theorem \ref{th3.5}, we know the form of
Gorenstein projective objects in $(\mathbf{F},\mathcal{B})$.
If a morphism ${X \choose Y}_{\phi}\rightarrow{X' \choose Y'}_{\phi'}$ factors through a
projective object ${0 \choose P}\oplus{Q \choose \mathbf{F}Q}$, then we have the following diagram with exact rows
$$\xymatrix{0\ar[r]&\mathbf{F}X\ar[r]^{\phi}\ar[d]&Y\ar[r]\ar[d]&\Coker \phi\ar[r]\ar[d]&0\\
            0\ar[r]&\mathbf{F}Q\ar[r]\ar[d]&\mathbf{F}Q\oplus P\ar[r]\ar[d]&P\ar[r]\ar[d]&0\\
            0\ar[r]&\mathbf{F}X'\ar[r]^{\phi'}&Y'\ar[r]&\Coker \phi'\ar[r]&0.}$$
Hence the functor $i^{*}$ in Lemma \ref{lem4.2} induces a functor which we still denote by
$i^{*}:\underline{\mathcal{GP}((\mathbf{F},\mathcal{B}))}\rightarrow \underline{\mathcal{GP}(\mathcal{B})}$.

By Lemma \ref{lem4.2}, we have the functor $i_{*}$ given by $Y\rightarrow {0 \choose Y}$.
It is obvious a functor $\mathcal{GP}(\mathcal{B})\rightarrow \mathcal{GP}((\mathbf{F},\mathcal{B}))$.
If a morphism $Y\rightarrow Y'$ in $\mathcal{B}$ factors through a projective object $P$,
then ${0 \choose Y}\rightarrow {0 \choose Y'}$ factors through a projective object ${0 \choose P}$
in $(\mathbf{F},\mathcal{B})$. Hence $i_{*}$ induces a functor $i_{*}:\underline{\mathcal{GP}(\mathcal{B})}
\rightarrow \underline{\mathcal{GP}((\mathbf{F},\mathcal{B}))}$, which is fully faithful.

By Lemma \ref{lem4.2}, we have the functor $j_{!}$ given by
$A\rightarrow {A \choose \mathbf{F}A}$. It is a functor
$\mathcal{GP}(\mathcal{A})\rightarrow \mathcal{GP}((\mathbf{F},\mathcal{B}))$ by Theorem \ref{th3.5}.
If a morphism $X\rightarrow X'$ in $\mathcal{A}$ factors through a projective object $Q$,
then ${X \choose \mathbf{F}X}\rightarrow {X' \choose \mathbf{F}X'}$ factors through a projective object
${Q \choose \mathbf{F}Q}$ in $(\mathbf{F},\mathcal{B})$.
Hence $j_{!}$ induces a functor $j_{!}:\underline{\mathcal{GP}(\mathcal{A})}\rightarrow
\underline{\mathcal{GP}((\mathbf{F},\mathcal{B}))}$, which is fully faithful.

By Lemma \ref{lem4.2}, we have the functor $j^{*}$ given by
${X \choose Y}\rightarrow X $. It is a functor from $\mathcal{GP}((\mathbf{F},\mathcal{B}))\rightarrow
\mathcal{GP}(\mathcal{A})$ by Theorem \ref{th3.5}. If a morphism ${X \choose Y}\rightarrow {X' \choose Y'}$
in $(\mathbf{F},\mathcal{B})$ factors through a projective object ${Q \choose \mathbf{F}Q}\oplus {0 \choose P}$,
then $X \rightarrow X'$ factors through a projective object $Q$ in $\mathcal{A}$.
Hence $j^{*}$ induces a functor $j^{*}:\underline{\mathcal{GP}((\mathbf{F},\mathcal{B}))}\rightarrow \underline{\mathcal{GP}(\mathcal{A})}$.
It follows easily from \cite[Chapter I, Section 2]{H} that $i_{*}, j^{*}$ constructed above are exact functors.
By Lemma \ref{lem4.2}, we have that both ($i^{*},i_{*}$) and($j_{!},j^{*}$) are adjoint pairs. Thus
$i^{*}$ and $j_{!}$ are exact functors by \cite[Lemma 8.3]{K}.


By construction, we have $\Im i_{*} \subseteq \Ker j^{*}$ and $\Ker j^{*} = \{{X \choose Y}\in
\underline{\mathcal{GP}((\mathbf{F},\mathcal{B}))}\mid X \in \mathcal{P(A)}\}$. Let ${X \choose Y} \in \Ker j^{*} $.
By Theorem \ref{th3.5}, we have the following exact sequence
$$\xymatrix@C=15pt{0\ar[r]&\mathbf{F}X\ar[r]^{\phi}&Y\ar[r]&\Coker \phi \ar[r]&0}$$
in $\mathcal{B}$ with $\Coker\phi\in\mathcal{GP}(\mathcal{B})$. Then $\Ext_{\mathcal{B}}^{1} (\Coker\phi,\mathbf{F}X)=0$
by Lemma \ref{lem3.4}. So the above exact splits and $Y\cong\mathbf{F}X \oplus \Coker\phi$. Thus we have
$${X \choose Y} \cong
{X \choose \mathbf{F}X}\oplus {0\choose \Coker\phi}= i_{*}(\Coker\phi),$$
which implies $\Ker j^{*}\subseteq\Im i_{*}$.

Finally, applying Lemma \ref{lem4.5}(2) and Remark \ref{rem4.5}, we get the required left recollement.
\end{proof}

It is natural to ask when the left recollement in Theorem \ref{th4.4} can be filled into a recollement. In the following,
we will study this question.

Recall from \cite{BR} that
an abelian category $\mathcal{B}$ with enough projective and injective objects is called {\it Gorenstein} if
$\pd_{\mathcal{B}}\mathcal{I(B)}<\infty$ and $\id_{\mathcal{B}}\mathcal{P(B)}<\infty$.

\begin{lem}\label{lem4.6}
Let $\mathbf{F}$ be perfect. If $\mathcal{B}$ is Gorenstein and $\mathbf{F}$ preserves projectives, then
$\mathbf{F}$ preserves Gorenstein projectives.
\end{lem}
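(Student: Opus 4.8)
The plan is to take a Gorenstein projective object $G$ in $\mathcal{A}$, fix a complete $\mathcal{A}$-projective resolution $\mathcal{Q}^{\bullet}$ of $G$, and show that $\mathbf{F}$ applied to it is a complete $\mathcal{B}$-projective resolution of $\mathbf{F}G$. First I would use condition (P1) of perfectness: since $\mathcal{Q}^{\bullet}$ is in particular an exact sequence of projective objects in $\mathcal{A}$, $\mathbf{F}\mathcal{Q}^{\bullet}$ is exact; and since $\mathbf{F}$ preserves projectives by hypothesis, every term of $\mathbf{F}\mathcal{Q}^{\bullet}$ is projective in $\mathcal{B}$, with $\mathbf{F}G \cong \Im(\mathbf{F}Q_{0}\to\mathbf{F}Q^{0})$. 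So it remains only to check that $\mathbf{F}\mathcal{Q}^{\bullet}$ stays exact after applying $\Hom_{\mathcal{B}}(-,P)$ for every projective $P\in\mathcal{P}(\mathcal{B})$, equivalently that $\Ext^{\geq 1}_{\mathcal{B}}(\Omega,P)=0$ and $\Ext^{\geq 1}_{\mathcal{B}}(P',\mathbf{F}G)=0$-type vanishing holds for the relevant cocycles; more precisely one needs each cosyzygy of $\mathbf{F}\mathcal{Q}^{\bullet}$ to have no extensions into $\mathcal{P}(\mathcal{B})$ on the correct side so that the $\Hom(-,P)$-complex is exact.

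The key point where the Gorenstein hypothesis on $\mathcal{B}$ enters is precisely here. For a projective object $P$ in $\mathcal{B}$, the hypothesis gives $\id_{\mathcal{B}}P<\infty$; since $\mathbf{F}G$ is a submodule (a cosyzygy, in fact) sitting inside an exact complex of projectives, I would argue that $\Ext^{i}_{\mathcal{B}}(\mathbf{F}G,P)=0$ for all $i\geq 1$ because $\mathbf{F}G$ has, on one side, arbitrarily long resolutions by projectives (the left half of $\mathbf{F}\mathcal{Q}^{\bullet}$), so a dimension shift reduces any $\Ext^{i}(\mathbf{F}G,P)$ to $\Ext^{i+n}(\text{(projective cosyzygy)},P)=0$ once $n>\id_{\mathcal{B}}P$. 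The same argument applies to every cocycle $Z^{j}=\Im(\mathbf{F}Q^{j-1}\to\mathbf{F}Q^{j})$ of $\mathbf{F}\mathcal{Q}^{\bullet}$, since each such $Z^{j}$ is itself of the form "$\mathbf{F}$ of a Gorenstein projective object of $\mathcal{A}$" (namely $\mathbf{F}$ applied to the corresponding $\mathcal{A}$-cocycle, which is Gorenstein projective), so the argument is uniform. Consequently applying $\Hom_{\mathcal{B}}(-,P)$ to each short exact sequence $0\to Z^{j}\to \mathbf{F}Q^{j}\to Z^{j+1}\to 0$ yields a short exact sequence of $\Hom$'s, and splicing these shows $\Hom_{\mathcal{B}}(\mathbf{F}\mathcal{Q}^{\bullet},P)$ is exact.

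Putting this together: $\mathbf{F}\mathcal{Q}^{\bullet}$ is an exact sequence of projective objects in $\mathcal{B}$ which remains exact under $\Hom_{\mathcal{B}}(-,\mathcal{P}(\mathcal{B}))$, and $\mathbf{F}G$ is the image of its degree-zero map, so $\mathbf{F}G\in\mathcal{GP}(\mathcal{B})$ by Definition \ref{def2.5}. I expect the main obstacle to be the bookkeeping in the dimension-shifting step — namely verifying cleanly that every cosyzygy of $\mathbf{F}\mathcal{Q}^{\bullet}$ has finite-length resolutions available on the appropriate side so that $\Ext^{i}(-,P)$ against a projective $P$ of $\mathcal{B}$ vanishes in all positive degrees. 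This uses in an essential way both halves of the complete resolution $\mathcal{Q}^{\bullet}$ (so that the cosyzygies have projective resolutions stretching infinitely in the correct direction) together with $\id_{\mathcal{B}}\mathcal{P}(\mathcal{B})<\infty$ from the Gorenstein assumption; condition (P2) of perfectness is not needed for this direction, only (P1) and the preservation of projectives.
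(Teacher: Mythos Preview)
Your argument is correct and follows essentially the same route as the paper: apply $\mathbf{F}$ to a complete $\mathcal{A}$-projective resolution, use (P1) and preservation of projectives to obtain an exact complex of projectives in $\mathcal{B}$, and then invoke $\id_{\mathcal{B}}\mathcal{P}(\mathcal{B})<\infty$ from the Gorenstein hypothesis to conclude $\Hom_{\mathcal{B}}(-,P)$-exactness. One small slip: the dimension shift $\Ext^{i}(\mathbf{F}G,P)\cong\Ext^{i+n}(\text{cosyzygy},P)$ uses the \emph{right} half of $\mathbf{F}\mathcal{Q}^{\bullet}$ (the coresolution side), not the left; the paper compresses this entire step into a single sentence.
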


\begin{proof}
Let $Y\in\mathcal{A}$ be Gorenstein projective. Then there exists a complete $\mathcal{P(A)}$-resolution
$$\xymatrix@C=1cm{
\mathrm{Q^{\bullet}}: = \cdots \to  Q_{1} \to Q_{0} \buildrel {d} \over \longrightarrow Q^{0} \to Q^{1} \to \cdots }$$
in $\mathcal{A}$ such that $Y\cong\Im d$.
Since $\mathbf{F}$ is perfect, $\mathbf{F}Q^{\bullet}$ is exact and $\mathbf{F}Y\cong \Ker\mathbf{F}d$.
If $\mathbf{F}$ preserves projectives, then all terms in $\mathbf{F}Q^{\bullet}$ are projective in $\mathcal{B}$.
Let $P\in\mathcal{B}$ be projective. Because $\mathcal{B}$ is Gorenstein by assumption, we have $\id_{\mathcal{B}}P<\infty$.
So $\Hom(\mathbf{F}Q^{\bullet},P)$ is exact, and hence $\mathbf{F}Y$ is Gorenstein projective.
\end{proof}

As a generalization of \cite[Theorem 3.5]{Z}, we have the following

\begin{thm}\label{thm4.7}
Let $\mathbf{F}$ be perfect. If $\mathcal{B}$ is Gorenstein and $\mathbf{F}$ preserves projectives,
then there exists a recollement
$$\xymatrix{\underline{\mathcal{GP}(\mathcal{B})}\ar[rr]!R|(.4){i_{*}}&&\ar@<-2ex>[ll]!R|(.6){i^{*}}
\ar@<2ex>[ll]!R|(.6){i^{!}}\underline{\mathcal{GP}((\mathbf{F},\mathcal{B}))}
\ar[rr]!L|(.6){j^{*}}&&\ar@<-2ex>[ll]!L|(.4){j_{!}}\ar@<2ex>[ll]!L|(.4){j_{*}}\underline{\mathcal{GP}(\mathcal{A})}.}$$
\end{thm}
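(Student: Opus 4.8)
\textbf{Proof proposal for Theorem \ref{thm4.7}.}
The plan is to start from the left recollement already produced in Theorem \ref{th4.4} and, invoking Lemma \ref{lem4.5}(2) together with Remark \ref{rem4.5}, observe that it suffices to construct two further exact functors $i^{!}:\underline{\mathcal{GP}((\mathbf{F},\mathcal{B}))}\to\underline{\mathcal{GP}(\mathcal{B})}$ and $j_{*}:\underline{\mathcal{GP}(\mathcal{A})}\to\underline{\mathcal{GP}((\mathbf{F},\mathcal{B}))}$ making $(i_{*},i^{!})$ and $(j^{*},j_{*})$ adjoint pairs, with $j_{*}$ fully faithful; the remaining condition $\Im i_{*}=\Ker j^{*}$ has already been checked in the proof of Theorem \ref{th4.4}, so Lemma \ref{lem4.5} will then deliver the full recollement.

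First I would handle $j_{*}$. On the abelian level Lemma \ref{lem4.2} gives $j_{*}:Y\mapsto{Y\choose 0}$, but ${Y\choose 0}$ need not be Gorenstein projective in $(\mathbf{F},\mathcal{B})$ (the map $\mathbf{F}Y\to 0$ is not injective unless $\mathbf{F}Y=0$), so this functor must be corrected. The natural replacement on Gorenstein projectives is $Y\mapsto{Y\choose \mathbf{F}Y}$ composed with a "cone" adjustment, or more precisely to define $j_{*}$ on the stable category using the triangulated structure: for $Y\in\mathcal{GP}(\mathcal{A})$ take a short exact sequence $0\to\mathbf{F}Y\to E\to\mathbf{F}Y\to 0$ realizing $\mathbf{F}Y$ appropriately, but cleanly one sets $j_{*}(Y)$ to be the object ${Y\choose \mathbf{F}Y}$ shifted, using that by Lemma \ref{lem4.6} $\mathbf{F}Y$ is Gorenstein projective in $\mathcal{B}$, hence $i_{*}(\mathbf{F}Y)={0\choose \mathbf{F}Y}$ is an object of $\underline{\mathcal{GP}((\mathbf{F},\mathcal{B}))}$. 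Concretely I would define $j_{*}$ by the distinguished triangle $i_{*}\mathbf{F}Y\to j_{!}Y\to j_{*}Y\to$ in $\underline{\mathcal{GP}((\mathbf{F},\mathcal{B}))}$, i.e. $j_{*}Y=\mathrm{Cone}(i_{*}\mathbf{F}Y\to j_{!}Y)$ where the map comes from $\phi=\mathrm{id}$; this is exact, and one checks $j^{*}j_{*}\cong\mathrm{id}$ (since $j^{*}i_{*}=0$ and $j^{*}j_{!}=\mathrm{id}$), giving full faithfulness of $j_{*}$, and that $(j^{*},j_{*})$ is an adjoint pair by verifying the unit/counit identities on generators.

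Dually, for $i^{!}$ the abelian functor ${Y\choose X}\mapsto X$ does not land in $\mathcal{GP}(\mathcal{B})$ in general, so I would instead read $i^{!}$ off the second triangle of the recollement: set $i^{!}{Y\choose X}_{\phi}=\mathrm{Fib}$ of the canonical map ${Y\choose X}_{\phi}\to j_{*}j^{*}{Y\choose X}_{\phi}$, equivalently the object arising from the kernel-type sequence dual to the one used for $j_{*}$. The point that makes this work is precisely Lemma \ref{lem4.6}: when $\mathcal{B}$ is Gorenstein and $\mathbf{F}$ preserves projectives, $\mathbf{F}$ sends $\mathcal{GP}(\mathcal{A})$ into $\mathcal{GP}(\mathcal{B})$, so all the relevant cones and fibres stay inside the Gorenstein projective subcategories and the triangulated structure (the Frobenius stable structure recalled after Definition \ref{def2.5}) is available. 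Then $(i_{*},i^{!})$ adjointness and $i^{!}i_{*}\cong\mathrm{id}$ follow formally once we know that on objects of $\Ker j^{*}=\Im i_{*}$ the functor $i^{!}$ agrees with the inverse of $i_{*}$, which is the computation $Y\cong\mathbf{F}X\oplus\Coker\phi$ already done in Theorem \ref{th4.4}.

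I expect the main obstacle to be the bookkeeping that $j_{*}$ and $i^{!}$, defined via cones/fibres in the stable category, are genuinely well defined and functorial and satisfy the adjunction identities — in particular checking that the comparison maps in Lemma \ref{lem4.5}(4), namely $\Im j_{*}=\Ker i^{!}$, hold; equivalently that an object ${Y\choose X}_{\phi}$ lies in $\Ker i^{!}$ exactly when $X\cong\mathbf{F}Y$, i.e. $\Coker\phi$ is projective. This is where Lemma \ref{lem4.6} is used in an essential way (without "$\mathbf{F}$ preserves projectives" the functor $j_{*}$ would not take values in $\underline{\mathcal{GP}((\mathbf{F},\mathcal{B}))}$), and once that identification is in place Lemma \ref{lem4.5} upgrades the left recollement of Theorem \ref{th4.4} to the asserted recollement. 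The remaining verifications — exactness of the new functors, the triangle axiom (R4), the vanishing (R3) $j^{*}i_{*}=0$ — are either inherited from Theorem \ref{th4.4} or are routine diagram chases using the Horseshoe Lemma \ref{lem2.1}.
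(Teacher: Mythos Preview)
Your outline has the right overall shape --- start from the left recollement of Theorem \ref{th4.4} and supply $i^{!}$ and $j_{*}$ --- but there is a concrete misconception that sends you down an unnecessarily indirect path, and a resulting confusion at the end.

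You assert that the abelian functor $i^{!}:{Y\choose X}_{\phi}\mapsto X$ ``does not land in $\mathcal{GP}(\mathcal{B})$ in general''. Under the hypotheses of Theorem \ref{thm4.7} this is false: since $\mathbf{F}$ preserves projectives and $\mathcal{B}$ is Gorenstein, Lemma \ref{lem4.6} gives $\mathbf{F}Y\in\mathcal{GP}(\mathcal{B})$, and the exact sequence $0\to\mathbf{F}Y\to X\to\Coker\phi\to 0$ from Theorem \ref{th3.5} (with $\Coker\phi\in\mathcal{GP}(\mathcal{B})$) then forces $X\in\mathcal{GP}(\mathcal{B})$ by closure under extensions. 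Moreover, because $\mathbf{F}Q\in\mathcal{P}(\mathcal{B})$ for $Q\in\mathcal{P}(\mathcal{A})$, any morphism in $(\mathbf{F},\mathcal{B})$ factoring through ${0\choose P}\oplus{Q\choose\mathbf{F}Q}$ has $\mathcal{B}$-component factoring through the projective $P\oplus\mathbf{F}Q$. Hence the naive $i^{!}$ descends to stable categories directly, and the adjunction $(i_{*},i^{!})$ is inherited from Lemma \ref{lem4.2}. This is exactly what the paper does; your fibre construction is both unnecessary and partly circular, since it presupposes $j_{*}$.

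For $j_{*}$, your cone $\mathrm{Cone}\bigl(i_{*}\mathbf{F}Y\to j_{!}Y\bigr)$ is the paper's construction in disguise. Computing that cone in the Frobenius category $\mathcal{GP}((\mathbf{F},\mathcal{B}))$ means pushing out ${0\choose\mathbf{F}Y}\to{Y\choose\mathbf{F}Y}$ along an embedding ${0\choose\mathbf{F}Y}\hookrightarrow{0\choose P}$ with $P\in\mathcal{P}(\mathcal{B})$ and $P/\mathbf{F}Y\in\mathcal{GP}(\mathcal{B})$; the pushout is precisely ${Y\choose P}_{\phi}$ with $\phi:\mathbf{F}Y\hookrightarrow P$. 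The paper defines $j_{*}(Y)={Y\choose P}_{\phi}$ this way and then checks by hand that it is well defined (independent of $P$), functorial on stable categories, fully faithful, and right adjoint to $j^{*}$. Your ``one checks'' and ``follow formally'' hide exactly this work.

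Finally, your stated characterisation of $\Ker i^{!}$ as ``$X\cong\mathbf{F}Y$, i.e.\ $\Coker\phi$ is projective'' is wrong: that condition describes $\Ker i^{*}=\Im j_{!}$, not $\Ker i^{!}=\Im j_{*}$. With the correct $i^{!}$ and $j_{*}$ as above, $\Ker i^{!}$ consists of those ${Y\choose X}_{\phi}$ with $X$ projective in $\mathcal{B}$, which is exactly $\Im j_{*}$.
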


\begin{proof}
By Theorem \ref{th4.4}, there exists the following left recollement
$$\xymatrix{\underline{\mathcal{GP}(\mathcal{B})}\ar@<-1ex>[rr]!R|(.4){i_{*}}
&&\ar@<-1ex>[ll]!R|(.6){i^{*}}\ar@<-1ex>[rr]!R|(.4){j^{*}}\underline{\mathcal{GP}((\mathbf{F},\mathcal{B}))}
&&\ar@<-1ex>[ll]!R|(.6){j_{!}}\underline{\mathcal{GP}(\mathcal{A})}.}$$
By Lemma \ref{lem4.2}, we have the functor $i^{!}$ given by ${X \choose Y}\rightarrow Y$. It is
a functor $\mathcal{GP}((\mathbf{F},\mathcal{B}))\to\mathcal{GP}(\mathcal{B})$. If a morphism
${X \choose Y}_{\phi}\rightarrow{X' \choose Y'}_{\phi'}$ in $(\mathbf{F},\mathcal{B})$ factors through
a projective object ${0 \choose P}\oplus{Q \choose\mathbf{F}Q}$ with $Q$ projective in $\mathcal{A}$
and $P$ projective in $\mathcal{B}$, then $Y \rightarrow Y'$ factors through $P\oplus\mathbf{F}Q$.
Since $\mathbf{F}$ preserves projectives, we have that $\mathbf{F}Q$ is projective in $\mathcal{B}$, and so
$P\oplus\mathbf{F}Q$ is also projective in $\mathcal{B}$. Hence $i^{!}$ induces
a functor $i^{!}:\underline{\mathcal{GP}((\mathbf{F},\mathcal{B}))}\rightarrow\underline{\mathcal{GP}(\mathcal{B})}$.
By Lemma \ref{lem4.2}, ($i_{*},i^{!}$) is an adjoint pair.

We claim that there
exists a fully faithful functor
$j_{*}:\underline{\mathcal{GP}(\mathcal{A})}\rightarrow
\underline{\mathcal{GP}((\mathbf{F},\mathcal{B}))}$ given by $X\rightarrow{X \choose
P}$ with $P\in \mathcal{P}({\mathcal{B}})$, such that there exists an exact sequence
$$\xymatrix@C=15pt{0\ar[r]&\mathbf{F}X\ar[r]^{\phi}&P\ar[r]&\Coker \phi \ar[r]&0}$$
in $\mathcal{B}$ with $\Coker \phi \in \mathcal{GP}(\mathcal{B})$.

Let $X \in \mathcal{GP}(\mathcal{A})$. By Lemma \ref{lem4.6}, $\mathbf{F}X \in
\mathcal{GP}(\mathcal{B})$ and there exists an exact sequence
$$\xymatrix@C=15pt{0\ar[r]&\mathbf{F}X\ar[r]^{\phi}&P\ar[r]&\Coker \phi \ar[r]&0}$$
in $\mathcal{B}$ with $P \in \mathcal{P}({\mathcal{B}})$ and $\Coker \phi \in
\mathcal{GP}(\mathcal{B})$. Let $g:X \rightarrow X'$ be a morphism in
$\mathcal{GP}(\mathcal{B})$ and $P' \in \mathcal{P}({\mathcal{B}})$ such that
$$\xymatrix@C=15pt{0\ar[r]&\mathbf{F}X'\ar[r]^{\phi'}&P'\ar[r]&\Coker \phi'\ar[r]&0}$$
is an exact sequence in $\mathcal{B}$ with $\Coker \phi' \in \mathcal{GP}(\mathcal{B})$. Since
$\Ext_{\mathcal{B}}^{1} (\Coker\phi, P') = 0 $, we have the following diagram with exact rows
$$\xymatrix@C=15pt{0\ar[r]&\mathbf{F}X\ar[r]^{\phi}\ar[d]^{\mathbf{F}g}&P\ar[r]
\ar@{-->}[d]^{f}&\Coker \phi \ar[r]\ar@{-->}[d]&0\\
0\ar[r]&\mathbf{F}X'\ar[r]^{\phi'}&P'\ar[r]&\Coker \phi' \ar[r]&0.}$$
If there exists a morphism $f':P\rightarrow P'$ such that $f'\phi = \phi'\mathbf{F}g$,
then $f'-f$ factors through $\Coker \phi$. Since $\Coker \phi \in\mathcal{GP}(\mathcal{B}) $,
we have a monomorphism $\rho:\Coker \phi\rightarrow \widetilde{P}$ with $\widetilde{P}$ projective in $\mathcal{B}$.
Then we easily see that ${g \choose f}-{g \choose f'}$ factors through the projective object ${0 \choose \widetilde{P}}$
in $(\mathbf{F},\mathcal{B})$ and hence ${g \choose f} = {g \choose f'}$ in $\underline{\mathcal{GP}((\mathbf{F},\mathcal{B}))}$.
Note that if we take $g=\id_{X}$, this also proves that the object ${X \choose P}\in \underline{\mathcal{GP}((\mathbf{F},\mathcal{B}))}$
is independent of the choice of $P$. Thus we get a functor
$j_{*}':\mathcal{GP}(\mathcal{B})\rightarrow\underline{\mathcal{GP}((\mathbf{F},\mathcal{B}))}$.

Assume that $g:X \rightarrow X'$ in $\mathcal{A}$ factors through a projective object $Q$ with
$g =g_{2}g_{1} $. Since $\mathbf{F}Q$ is projective in $\mathcal{B}$ by assumption, it is injective in
$\mathcal{GP}(\mathcal{B})$, therefore there exists a morphism
$\alpha:P\rightarrow\mathbf{F}Q$ such that $\mathbf{F}g_{1} = \alpha \phi $. Since
$(f-\phi'\mathbf{F}g_{2}\alpha)\phi=0$, there exists $\widetilde{f}:\Coker
\phi\rightarrow P'$ such that $(f-\phi'\mathbf{F}g_{2}\alpha)= \widetilde{f}\pi$.
Let $\eta:\Coker \phi\rightarrow P_{1}$ be a monomorphism with $P_{1}\in
\mathcal{P}(\mathcal{B}) $. Then we get $\beta:P_{1}\rightarrow P'$ such that
$\widetilde{f}=\beta\eta$. Thus ${g \choose f}$ factors through the projective
object ${Q \choose \mathbf{F}Q}\oplus{0 \choose P_{1}}$ in $(\mathbf{F},\mathcal{B})$ with ${g \choose
f}={g_{2} \choose (\phi'\mathbf{F}g_{2},\beta)}{g_{1}\choose {\alpha \choose
\eta\pi}}$. Therefore $j_{*}'$ induces a functor
$j_{*}:\underline{\mathcal{GP}(\mathcal{B})}\rightarrow\underline{\mathcal{GP}((\mathbf{F},\mathcal{B}))}$
which given by $X\rightarrow{X \choose P}$ and $g\rightarrow{g \choose f}$.
If ${g \choose f}$ factors through a projective object ${0 \choose P}\oplus{Q \choose \mathbf{F}Q}$
in $(\mathbf{F},\mathcal{B})$, then $g$ factors through the projective object $Q$. Thus $j_{*}$ is fully faithful.
The claim is proved.

Let ${g \choose f}:{X \choose Y}\rightarrow{X' \choose P}$ be a morphism in
$\mathcal{GP}((\mathbf{F},\mathcal{B}))$. By Theorem \ref{th3.5}, there exists an exact sequence
$$\xymatrix@C=15pt{0\ar[r]&\mathbf{F}X'\ar[r]^{\phi}&P\ar[r]&\Coker \phi \ar[r]&0}$$
in $\mathcal{B}$ with $P$ projective and $\Coker\phi\in\mathcal{GP}(\mathcal{B})$. Then ${g \choose
f}$ factors through the projective object ${0 \choose P'}\oplus{Q \choose \mathbf{F}Q}$
in $(\mathbf{F},\mathcal{B})$ if and only if $g:X\rightarrow X'$ factors through the projective object $Q$ in $\mathcal{B}$.
It follows that the isomorphism
$$\Hom_{\underline{\mathcal{GP}(\mathcal{A})}}(X,X')\cong
\Hom_{\underline{\mathcal{GP}((\mathbf{F},\mathcal{B}))}}({X \choose Y},{X' \choose P})$$
is natural in both variables and ($j^{*},j_{*}$) is an adjoint pair.

Finally, applying Lemma \ref{lem4.5}, we get the required recollement.
\end{proof}

\vspace{0.5cm}

\textbf{Acknowledgement.} This research was partially supported by NSFC (Grant No. 11571164)
and a Project Funded by the Priority Academic Program Development of Jiangsu Higher Education Institutions.


\begin{thebibliography}{19}
\bibitem{AS}
J.~Asadollahi and S.~Salarian, {\it Gorenstein objects in triangulated categories}, J. Algebra {\bf 281} (2004), 264--286.

\bibitem{AB}
M.~Auslander and M.~Bridger, Stable Module Theory, Mem. Amer. Math. Soc. {\bf 94}, Amer. Math. Soc., Providence, RI, 1969.



\bibitem{BR}
A.~Beligiannis and I.~Reiten, Homological and Homotopical Aspects of Torsion Theories,
Mem. Amer. Math. Soc. {\bf 188}, Amer. Math. Soc., Providence, RI, 2007.


\bibitem{BBD}
A.~A. Be{\u\i}linson, J.~Bernstein and P.~Deligne, Faisceaux Pervers, Ast\'erisque {\bf 100}, Soc. Math. France,
Paris, 1982.




\bibitem{ECT}
E.~E. Enochs, M.~Cort\'es-Izurdiaga and B.~Torrecillas, {\it Gorenstein conditions over triangular matrix rings},
J. Pure Appl. Algebra {\bf 218} (2014), 1544--1554.


\bibitem{EJ1} E.~E. Enochs and O.~M. G. Jenda, {\it Gorenstein injective and projective modules},
Math. Z. {\bf 220} (1995), 611--633.

\bibitem{EJ2}
E.~E. Enochs and O.~M. G. Jenda, Relative Homological Algebra, Vol. 1, Second revised and extended edition,
De Gruyter Expositions in Math. {\bf 30}, Walter de Gruyter, Berlin, 2011.


\bibitem{FGR}
R. M. Fossum, P. A. Griffith and I. Reiten, Trivial Extensions of Abelian Categories (Homological Algebra of
Trivial Extensions of Abelian Categories with Applications to Ring Theory),
Lect. Notes in Math. {\bf 456}, Springer-Verlag, Berlin-Heidelberg-New York, 1975.

\bibitem{FP}
V.~Franjou and T.~Pirashvili, {\it Comparison of abelian categories recollements}, Doc. Math. {\bf 9} (2004), 41--56.

\bibitem{H}
D.~Happel, {\it Triangulated Categories in the Representation Theory of Finite Dimensional Algebras},
Lond. Math. Soc. Lecture Note Ser. {\bf 119}, Cambridge University Press, Cambridge, 1988.

\bibitem{IKM}
O.~Iyama, K.~Kato and J.-I.~Miyachi, {\it Recollement on homotopy categories and Cohen-Macaulay modules},
J. K-Theory {\bf 8} (2011), 507--541.

\bibitem{K}
B.~Keller, {\it Derived categories and their uses}, Handbook of algebra, Vol. 1,
Edited by M. Hazewinkel, North-Holland, Amsterdam, 1996, pp.671--701.










\bibitem{Ps}
C.~Psaroudakis, {\it Homological theory of recollements of abelian categories}, J. Algebra {\bf 398} (2014), 63--110.

\bibitem{PSS}
C.~Psaroudakis, \O.~Skarts{\ae}terhagen and \O.~Solberg, {\it Gorenstein categories, singular equivalences and finite generation of
cohomology rings in recollements}, Trans. Amer. Math. Soc. (Ser. B) {\bf 1} (2014), 45--95.

\bibitem{XZ}
B.~L. Xiong and P.~Zhang, {\it Gorenstein-projective modules over triangular matrix artin algebras}, J. Algebra Appl. {\bf 11} (2012), 65--80.

\bibitem{Z}
P.~Zhang, {\it Gorenstein-projective modules and symmetric recollements}, J. Algebra {\bf 388} (2013), 65--80.


\end{thebibliography}
\end{document}